\title{Invariant random subgroups in hyperbolic reflection groups}
\author{Jean Raimbault}
\address{Institut de Mathématiques de Marseille, UMR 7373, CNRS, Aix-Marseille Université}
\email{jean.raimbault@univ-amu.fr}
\DeclareFontFamily{U}{wncy}{}
\DeclareFontShape{U}{wncy}{m}{n}{<->wncyr10}{}
\DeclareSymbolFont{mcy}{U}{wncy}{m}{n}
\DeclareMathSymbol{\Sha}{\mathord}{mcy}{"58}
\newcommand{\sub}{\mathrm{Sub}}
\newcommand{\eps}{\varepsilon}
\newcommand{\wdt}[1]{\widetilde #1}
\newcommand{\wdh}[1]{\widehat #1}
\newcommand{\pl}{\partial}
\newcommand{\bs}{\backslash}
\newcommand{\vol}{\operatorname{vol}}
\newcommand{\id}{\operatorname{Id}}
\newcommand{\interval}[4]{
  \ifthenelse{ \equal{#1}{o} } {\mathopen{]}} {\mathopen{[}}
  #2, #3
  \ifthenelse{ \equal{#4}{o} } {\mathclose{[}} {\mathclose{]}}
}
\newcommand{\PO}{\mathrm{PO}}
\newcommand{\PGL}{\mathrm{PGL}}
\newcommand{\Conv}{\mathrm{Conv}}
\newcommand{\supp}{\mathrm{supp}}
\newcommand{\RR}{\mathbb R}
\newcommand{\ZZ}{\mathbb Z}
\newcommand{\HH}{\mathbb H}
\newcommand{\PP}{\mathbb P}
\DeclareFontFamily{OMX}{MnSymbolE}{}
\DeclareSymbolFont{MnLargeSymbols}{OMX}{MnSymbolE}{m}{n}
\DeclareFontShape{OMX}{MnSymbolE}{m}{n}{
    <-6>  MnSymbolE5
   <6-7>  MnSymbolE6
   <7-8>  MnSymbolE7
   <8-9>  MnSymbolE8
   <9-10> MnSymbolE9
  <10-12> MnSymbolE10
  <12->   MnSymbolE12
}{}
\DeclareFontShape{OMX}{MnSymbolE}{b}{n}{
    <-6>  MnSymbolE-Bold5
   <6-7>  MnSymbolE-Bold6
   <7-8>  MnSymbolE-Bold7
   <8-9>  MnSymbolE-Bold8
   <9-10> MnSymbolE-Bold9
  <10-12> MnSymbolE-Bold10
  <12->   MnSymbolE-Bold12
}{}
\let\llangle\@undefined
\let\rrangle\@undefined
\DeclareMathDelimiter{\llangle}{\mathopen}%
                     {MnLargeSymbols}{'164}{MnLargeSymbols}{'164}
\DeclareMathDelimiter{\rrangle}{\mathclose}%
                     {MnLargeSymbols}{'171}{MnLargeSymbols}{'171}
\numberwithin{equation}{section}
\begin{document}

\newtheorem{theo}{Theorem}
\newtheorem{lem}[theo]{Lemma}
\newtheorem{prop}[theo]{Proposition}
\newtheorem{cor}[theo]{Corollary}
\newtheorem{question}{Question}

\begin{abstract}
  We prove that certain Fuchsian reflection groups admit invariant random subgroups having uncountably many isomorphism types of subgroups in their support, providing an answer to a question of S.~Thomas. We also give similar constructions in higher-dimensional spaces. Our constructions are based on limits of compact Coxeter polytopes in hyperbolic spaces. We also provide examples of invariant random subgroups related to questions of Y.~Glasner and A.~Hase through a similar construction.  
\end{abstract}

\maketitle

The notion of {\em invariant random subgroup} is a probabilistic generalisation of that of a normal subgroup. An invariant random subgroup in a discrete groups $\Gamma$ is just a Borel probability measure on the Chabauty space of subgroups of $\Gamma$, which is invariant under conjugation by elements of $\Gamma$. Fundamental examples are given by subgroups whose conjugacy class is finite, for instance finite-index subgroups and normal subgroups. The fact that the space of invariant random subgroups is compact and so that one can study families of finite-index subgroups through their limits was the original motivation for the introduction of these objects (see for instance \cite{AGV} where the terminology was first used). On the other hand, as the properties of normal subgroups in a given group can give insight into the structure of this group or its study up to isomorphism, so should that of invariant random subgroups. The present paper is largely motivated by the latter paradigm.  

More precisely, we are interested in groups which contain {\em diffuse} invariant random subgroups, a notion which was introduced by S.~Thomas in \cite{Thomas_diffuse}. These are invariant random subgroups such that for any group in their support, the measure of subgroups which are isomorphic to it is zero. It is not obvious that such IRSs exist in any group at all, but in loc. cit. Thomas constructs examples of discrete groups which admits diffuse ergodic IRS and asks whether there are more ``natural'' examples of such. Our goal here is mostly to provide such examples in a geometric context. We also use similar constructions to provide examples of faithful and spanning IRSs as asked in \cite{Glasner_Hase}.


\subsection*{Fuchsian groups}

In free or surface groups it is well-known how to construct ergodic IRS with uncountable support (see for example \cite{Bowen}) but all subgroups in their support are free groups of countable rank and so there is only one isomorphism class. On the other hand we show that it is possible to construct diffuse IRS in some Fuchsian groups with torsion. Specifically, we will do this for the group $\Gamma$ which is the (4,4,4)-triangle group. That is, let $T_4$ be the unique triangle in $\HH^2$ with three angles equal to $\pi/4$, and let $\Gamma_{T_4}$ be the subgroup of isometries generated by reflections in the faces of $T_4$. This is a discrete group of isometries and the translates of $T_4$ by elements of $\Gamma$ tile $\HH^2$. It has the simple presentation
\[
\Gamma_{T_4} = \left\langle s_1, s_2, s_3\: \middle| \: (s_is_{i+1})^4, s_i^2\right\rangle
\]
(where the indices are taken cyclically over $i=1, 2, 3$). With this notation we prove the following. 

\begin{theo} \label{main1}
  The group $\Gamma_{T_4}$ admits an ergodic diffuse IRS. 
\end{theo}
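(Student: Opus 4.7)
The plan is to construct the IRS as the pushforward of an equivariant random labeling of the tiles of the $(4,4,4)$-tiling of $\HH^2$, the random subgroup being that generated by reflections across the boundary of the labeled region. Fix a base tile so that the tiles are identified with elements of $\Gamma_{T_4}$. Each vertex of the tiling has exactly $8$ tiles meeting at it (since each angle of $T_4$ is $\pi/4$), and each edge lies on a canonical geodesic whose orthogonal reflection is an element of $\Gamma_{T_4}$.

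Let $\mu_{1/2}$ be the Bernoulli$(1/2)$ measure on $\Omega=\{0,1\}^{\Gamma_{T_4}}$. For $\omega\in\Omega$, set $R(\omega)=\bigcup_{g:\,\omega(g)=1} g T_4$ and let $\Gamma_\omega\leq\Gamma_{T_4}$ be the subgroup generated by the reflections across the edges of $\partial R(\omega)$. The map $\omega\mapsto\Gamma_\omega$ is Borel and $\Gamma_{T_4}$-equivariant (the shift $g\cdot\omega$ yields $g\Gamma_\omega g^{-1}$), so its pushforward $\mu$ is a $\Gamma_{T_4}$-invariant probability measure on $\sub(\Gamma_{T_4})$; ergodicity is inherited from that of the Bernoulli shift on $\Omega$, which holds for any infinite countable group.

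The Coxeter structure of $\Gamma_\omega$ is read off $\omega$ locally: at a vertex $v$ with $k$ consecutive labeled tiles ($1\le k\le 7$), the interior angle of $R(\omega)$ at $v$ is $k\pi/4$ and the product of the two bounding reflections has order $4/\gcd(4,k)\in\{1,2,4\}$. Identifying reflections across collinear edges and recording these orders assembles into a labeled Coxeter graph $\Delta(\omega)$ with $\Gamma_\omega$ the associated Coxeter group. For $\mu_{1/2}$-a.e.\ $\omega$, the graph $\Delta(\omega)$ is infinite and contains every finite labeled configuration with its Bernoulli frequency; combined with a rigidity statement for two-dimensional Coxeter groups (namely that the abstract isomorphism type of $\Gamma_\omega$ determines $\Delta(\omega)$ up to countable ambiguity), this forces the $\mu_{1/2}$-measure of $\{\omega':\Gamma_{\omega'}\cong\Gamma_\omega\}$ to be zero for $\mu_{1/2}$-a.e.\ $\omega$, so that $\mu$ is diffuse.

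The main obstacle I anticipate is the last step: ensuring the Coxeter-theoretic rigidity, i.e.\ controlling how many $\omega$'s can produce a fixed isomorphism type of $\Gamma_\omega$. One wants to invoke a reflection-rigidity theorem for the two-dimensional Coxeter groups arising here (dihedral labels in $\{2,4\}$), of the kind established by Mühlherr and collaborators, or to construct an \emph{ad hoc} quasi-isometry invariant of the associated Davis complex sufficient to separate $\mu$-generic groups. A secondary subtlety is verifying that $\Gamma_\omega$ indeed has the expected Coxeter presentation: that reflections in distinct maximal segments of $\partial R(\omega)$ are distinct elements of $\Gamma_{T_4}$ and satisfy only the obvious dihedral relations, which should follow from the transparent geometric action of $\Gamma_{T_4}$ on $\HH^2$.
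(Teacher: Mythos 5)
There is a genuine gap, and in fact the construction collapses at the first step. For the Bernoulli$(1/2)$ labeling, almost surely some tile $gT_4$ is labeled while all three of its neighbours are unlabeled (this local configuration has probability $2^{-4}>0$, so it occurs a.s.\ by ergodicity). All three edges of such a tile lie on $\partial R(\omega)$, so $\Gamma_\omega$ contains the three reflections $gs_1g^{-1},gs_2g^{-1},gs_3g^{-1}$, which generate $g\Gamma_{T_4}g^{-1}=\Gamma_{T_4}$. Hence $\Gamma_\omega=\Gamma_{T_4}$ a.s.\ and $\mu=\delta_{\Gamma_{T_4}}$, which is not diffuse. The underlying problem is the ``secondary subtlety'' you dismiss: reflections across the boundary edges of a region only form a Coxeter generating set with the presentation read off from the interior angles when that region is a \emph{convex} Coxeter polygon. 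A Bernoulli-random $R(\omega)$ is a.s.\ disconnected with reflex vertices ($k\ge 5$), so the walls of distinct boundary edges cross inside and outside $R(\omega)$, the region is not a fundamental chamber for $\Gamma_\omega$, and the local angle count says nothing about $\Gamma_\omega$.

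Even if the first step were repaired (say by restricting to labelings whose region is convex), the diffuseness argument points the wrong way. The observation that $\Delta(\omega)$ ``contains every finite labeled configuration with its Bernoulli frequency'' is a property shared by almost every sample, and for random combinatorial structures such genericity typically makes a.e.\ pair of samples \emph{isomorphic} by a back-and-forth argument (the Rado-graph phenomenon); it does not force the isomorphism class to be null. You would also need the quoted reflection-rigidity input to reduce abstract isomorphism to a countable ambiguity, which you do not verify. The paper's construction is deliberately one-dimensional precisely to control both issues: it glues two fixed, asymmetric, $T_4$-tiled Coxeter polygons (an octagon and a decorated octagon) along a bi-infinite word $a\in\{1,2\}^\ZZ$, producing convex infinite Coxeter polygons $P_a$ whose linear Coxeter diagrams satisfy an explicit rigidity lemma (isomorphism of the groups forces the angle sequences to agree up to shift and mirror), and then a Zariski-density argument shows each isometry class meets only countably many $\Gamma$-conjugacy classes. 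Diffuseness then reduces to the fact that shift-orbits are countable, hence null for any non-atomic ergodic shift-invariant measure.
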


We expect that this is true of many reflection groups in $\HH^2$ (though not all, for instance right-angled groups and triangle groups with all angles equal to a given prime number have only countably many isomorphism types of subgroups, similar to free or surface groups) and that arguments similar to ours can be used in this setting, more generally it seems that the following question could have a definite answer.

\begin{question}
  Which finitely generated Fuchsian groups with torsion contain diffuse IRSs? 
\end{question}


\subsection*{Higher-dimensional right-angled Coxeter groups}

The question of whether lattices in higher-dimensional rank 1 groups\footnote{The Stuck--Zimmer theorem implies that lattices in higher-rank simple groups have no diffuse ergodic IRSs. } admit diffuse ergodic IRSs seems interesting and to require more sophisticated arguments. In this direction we prove the following theorem. 

\begin{theo} \label{main2}
  Let $P \subset \HH^d$, $d \ge 4$ be a right-angled polyhedron of finite volume. Then its reflection group $\Gamma_P$ admits diffuse ergodic IRSs.
\end{theo}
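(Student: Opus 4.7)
The plan is to construct the IRS by Bernoulli-type percolation on the arrangement of mirrors of $\Gamma_P$ in $\HH^d$, and to exploit the combinatorial flexibility granted by $d \ge 4$ to force diffuseness of the resulting measure.

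Let $\mathcal{M}$ denote the collection of all reflecting hyperplanes of $\Gamma_P$ in $\HH^d$; it is countable, locally finite, and carries a $\Gamma_P$-action with finitely many orbits. Fix $p \in (0,1)$ and let $\nu_p$ be the Bernoulli product measure on $2^{\mathcal{M}}$ with marginal $p$. Since $\Gamma_P$ merely permutes $\mathcal{M}$, the measure $\nu_p$ is $\Gamma_P$-invariant, and it is mixing because every $\Gamma_P$-orbit on $\mathcal{M}$ is infinite. For a configuration $\omega \subseteq \mathcal{M}$, set
\[
\Gamma_\omega := \langle r_M : M \in \omega \rangle \le \Gamma_P,
\]
where $r_M$ is the reflection in $M$. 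Because $P$ is right-angled, any two intersecting mirrors in $\mathcal{M}$ meet orthogonally, so $\Gamma_\omega$ is itself a right-angled Coxeter group whose Coxeter diagram encodes the incidence pattern of the mirrors in $\omega$. The map $\omega \mapsto \Gamma_\omega$ is Borel and $\Gamma_P$-equivariant, so the pushforward $\mu_p$ of $\nu_p$ is an ergodic IRS of $\Gamma_P$.

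The principal task, and the main obstacle, is to show that $\mu_p$ is diffuse for a suitable choice of $p$. Diffuseness amounts to the statement that no single isomorphism type of right-angled Coxeter group receives positive $\mu_p$-mass. The hypothesis $d \ge 4$ is used as follows: a finite-volume right-angled polytope in $\HH^d$ must have many codimension-$\ge 3$ intersection strata, and the local arrangement of pairwise orthogonal mirrors near such a stratum carries a combinatorial degree of freedom absent in dimensions $2$ and $3$. By choosing $p$ so that the random sub-arrangement $\omega$ almost surely touches such strata in infinitely many essentially independent places, one would argue that the random Coxeter diagram $D_\omega$ depends on uncountably many independent binary choices and therefore takes uncountably many isomorphism types. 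Combined with the known rigidity of (sufficiently rich) right-angled Coxeter groups—namely, that the abstract group determines its diagram up to a controlled equivalence—this would yield the required diffuseness.

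The hardest technical step is the probabilistic/combinatorial one: showing that the distribution of $D_\omega$ up to isomorphism is genuinely uncountable, not merely apparently so. One must rule out the two dangers that (i) countably many ``local patterns'' already suffice to classify almost every $D_\omega$, and (ii) different percolation realisations yield abstractly isomorphic groups via nontrivial Coxeter-diagram equivalences. If pure Bernoulli percolation on $\mathcal{M}$ turns out to be insufficient, a fallback is to replace the i.i.d.\ model by a more structured $\Gamma_P$-invariant random process—e.g. percolation on a single orbit of mirrors combined with an independent transverse randomisation made possible by the extra codimension available when $d \ge 4$—while preserving invariance and enlarging the essential support of $\mu_p$. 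Once diffuseness is established, an ergodic decomposition of $\mu_p$ yields the desired ergodic diffuse IRS.
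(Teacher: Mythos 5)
Your construction of the IRS itself is sound as far as it goes: pushing forward a Bernoulli measure on subsets of mirrors under the equivariant map $\omega \mapsto \Gamma_\omega$ does give an ergodic IRS of $\Gamma_P$. But the proof has a genuine gap exactly where you flag ``the principal task'': diffuseness is asserted, not proved. Three specific problems. First, ``the random Coxeter diagram depends on uncountably many independent binary choices, hence takes uncountably many isomorphism types'' is not an argument, and even made rigorous it would be strictly weaker than diffuseness (every isomorphism class must have measure zero; the paper explicitly distinguishes these two properties in its discussion of the case $d=3$). Second, the danger you name in (i) is real and unaddressed: Bernoulli-random countable structures are prone to zero--one laws that produce a \emph{single} isomorphism type almost surely (the Rado graph being the canonical example), so without a concrete rigidity argument your $\Gamma_\omega$ could perfectly well be a.s.\ isomorphic to one fixed group --- the same failure mode as Bowen's free-group IRSs. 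Third, $\Gamma_\omega$ is a reflection subgroup whose canonical Coxeter generators are the reflections in the walls of a fundamental chamber (Deodhar, Dyer), and these walls need not be the mirrors in $\omega$; so ``the Coxeter diagram encodes the incidence pattern of the mirrors in $\omega$'' is not correct as stated, which undermines the combinatorial analysis you propose.

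You have also misidentified where $d \ge 4$ enters. In the paper the hypothesis is used to convert group isomorphism into geometric isometry: by Radcliffe's rigidity theorem an isomorphism of right-angled Coxeter groups is induced by an isomorphism of their diagrams, hence of the polyhedra as abstract polyhedra, and then Mostow--Prasad rigidity applied to the codimension-one faces (hyperbolic polyhedra of dimension $d-1 \ge 3$) upgrades this to an isometry. There is no ``combinatorial degree of freedom in codimension $\ge 3$'' at play. The paper's construction is also quite different from percolation: it glues two finite $P$-tiled blocks $\wdh P_1, \wdh P_2$ along matching faces following a bi-infinite sequence $a \in \{1,2\}^\ZZ$, pushes forward a non-atomic shift-ergodic measure, and verifies the two conditions of Lemma \ref{criterion} --- that isomorphism of the groups $\Gamma_{P_a}$ forces isometry of the polyhedra $P_a$, and that non-periodic sequences yield infinite conjugacy classes. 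Those two verifications are precisely the content your proposal leaves open; to salvage the percolation approach you would need to supply analogues of both.
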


In dimension 3 we observe that our arguments give the weaker conclusion that there are IRSs of right-angled reflection groups whose support contains uncountably many isomorphism types. However, there exists ergodic IRSs which are not diffuse but satisfy the weaker property of having uncountably many isomorphism types in their support (Thomas constructed examples of such in the group of finitary permutations of a countable set). Thus we do not know if our construction for Theorem \ref{main2} yields ergodic diffuse IRSs for $d=3$, though we suspect it does. 

We note that it is well-known that there are infinitely many finite-volume right-angled polyhedra in $\HH^d$ for $2 \le d \le 8$ (which can be chosen to be compact for $d=2,3,4$) and none for $d \ge 13$ (\cite{PV,Dufour}), so Theorem \ref{main2} provides infinitely many exemples of lattices in $\PO(d, 1)$ for $4 \le d \le 8$ containing diffuse ergodic IRSs but cannot be applied in large dimensions. It seems plausible that the following question has a positive answer. 

\begin{question}
  Do all lattices in rank simple 1 Lie groups not isogenous to $\PGL_2(\RR)$ contain a diffuse ergodic IRS?
\end{question}

Note that there is (to my knowledge) no current example of a torsion-free hyperbolic group with a diffuse ergodic IRS.


\subsection*{Faithful and spanning IRSs}

We also construct examples of IRSs in right-angled groups with other properties. In \cite{Glasner_Hase} the authors introduce the notion of a faithful IRS and that of a spanning IRS (see \ref{def_GH} below). They prove that any Gromov-hyperbolic group admits a faithful ergodic IRS with uncountable support (in fact a more general result). On the other hand free groups contain ergodic IRSs which are both faitful and spanning (for instance the examples in \cite{Bowen}). We provide hyperbolic examples of in higher dimensions with the following result. 

\begin{theo} \label{faithful}
  Let $d \ge 2$ and let $P \subset \HH^d$ be a compact right-angled polyhedron. Then $\Gamma_P$ admits ergodic IRSs which are faithful and spanning. 
\end{theo}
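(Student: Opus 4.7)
\emph{Plan of proof.} Following the general strategy used in Theorems~\ref{main1} and~\ref{main2}, I would realise an IRS of $\Gamma_P$ as the random reflection subgroup associated to a random sub-polyhedron of $\HH^d$ built from a $\Gamma_P$-invariant random subset of tiles. Let $\mathcal T$ be the tiling of $\HH^d$ by $\Gamma_P$-translates of $P$ and $\mathcal W$ its collection of walls. Fix $q\in(0,1)$ and let $S\subseteq\mathcal T$ be obtained by Bernoulli percolation of parameter $q$; set $K(\omega):=\bigcup_{T\in S}T$ and
\[
H(\omega):=\langle r_w : w\in\mathcal W,\ w\subset\partial K(\omega)\rangle\le\Gamma_P.
\]
Since $\Gamma_P$ acts by bijections on $\mathcal T$ and Bernoulli measure is bijection-invariant, the law $\mu_q$ of $H$ is a $\Gamma_P$-invariant probability measure on $\sub(\Gamma_P)$, and it is ergodic by Kolmogorov's $0$--$1$ law.

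For \emph{spanning}, I would argue as follows. Fix $w_0\in\mathcal W$ with adjacent tiles $T_1,T_2$; the event $\{T_1\in S,\ T_2\notin S\}$ has probability $q(1-q)>0$ and puts $w_0$ on $\partial K$, hence $r_{w_0}\in H$. As $w_0$ traverses its infinite $\Gamma_P$-orbit, passing to a suitably sparse subsequence yields independent events, so by Borel--Cantelli some translate $gw_0$ satisfies this condition a.s.; thus $r_{w_0}=g^{-1}r_{gw_0}g\in g^{-1}Hg$. A union bound over the countable set $\mathcal W$ gives that every reflection generator of $\Gamma_P$ lies in some conjugate of $H$ almost surely, proving that $\mu_q$ is spanning.

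For \emph{faithfulness}, I would fix $\gamma\in\Gamma_P\setminus\{e\}$ and look for a bounded ``witness region'' $R_\gamma\subset\HH^d$ such that a prescribed local configuration of $S$ on the tiles meeting $R_\gamma$ forces $\gamma\notin H$. The key input is Deodhar's theorem, which says that the reflection subgroup $H$ is itself a Coxeter group with a canonical generating set of reflections indexed by its wall set; together with the CAT($-1$) geometry of $\HH^d$ this allows one to certify non-membership in $H$ from the walls crossed by a suitable path. Running over countably many pairwise disjoint translates $g\cdot R_\gamma$ yields independent positive-probability failures of $\gamma\in gHg^{-1}$, and Borel--Cantelli ensures that $\gamma$ is a.s.\ excluded from some conjugate, hence from the normal core. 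A union bound over $\Gamma_P$ then gives $\bigcap_g gHg^{-1}=\{e\}$ a.s.

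\emph{Main obstacle.} The delicate step is the faithfulness argument: one must control non-reflection elements in the random subgroup $H(\omega)$, which arise from products of chosen reflections in ways not directly prescribed by the set $S$. Making the witness-region construction precise for an arbitrary $\gamma\ne e$ requires a careful marriage of Deodhar's combinatorial theorem on reflection subgroups with the exponential wall-crossing geometry of $\HH^d$; compactness of $P$ enters here to ensure a uniform positive lower bound on the probability of the local tile-configurations used to witness the failure.
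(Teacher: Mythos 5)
There is a genuine gap, and in fact the construction as stated collapses before one even reaches the faithfulness step. Consider what ``$w\subset\partial K(\omega)$'' can mean. If $w$ denotes the full hyperplane fixed by the reflection $r_w$, then almost surely no hyperplane is entirely contained in $\partial K$ (that would require a coherent choice of $S$ along the whole infinite wall), so $H(\omega)=\{1\}$ a.s.\ and $\mu_q$ is not spanning. If instead, as your spanning argument implicitly assumes, $r_w\in H$ as soon as \emph{some} codimension-one face of the tiling lying in the hyperplane $w$ separates a tile of $S$ from a tile of its complement, then the construction degenerates the other way: each hyperplane of the arrangement is tiled by infinitely many faces, and for faces far apart along $w$ the corresponding pairs of adjacent tiles are disjoint, so the events ``this face lies in $\partial K$'' are independent with probability $2q(1-q)>0$ each. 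By the second Borel--Cantelli lemma almost surely every wall contributes a generator, hence $H(\omega)=\Gamma_P$ a.s.\ and $\mu_q=\delta_{\Gamma_P}$, which has kernel $K_\mu=\Gamma_P$ and is maximally non-faithful. Either way Bernoulli percolation on tiles, fed directly into ``reflections in boundary walls,'' does not produce the desired IRS. Separately, even granting a repaired construction, your faithfulness argument is a plan rather than a proof: you acknowledge yourself that controlling which reflections (and which non-reflection elements) end up in the subgroup generated by a random set of reflections is the crux, and Deodhar's theorem alone does not give you a witness region, because the wall set of $\langle r_{w_i}\rangle$ consists of all $H$-translates of the $w_i$ and these can return arbitrarily close to any prescribed base tile.

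For comparison, the paper avoids both problems by taking $\mu_0$ to be a weak-$*$ limit of the uniformly random conjugates of the \emph{finite-index} reflection subgroups $\Gamma_{P_n}$, where $P_n=\Conv_P(B(o,n))$ is the convex envelope of a large ball. Faithfulness then comes for free from geometry: $P_n$ is sandwiched between $B(o,n)$ and $B(o,n+C)$ (Lemma \ref{envelope}), so limit subgroups have fundamental domains containing arbitrarily large balls, which excludes every nontrivial element of $B_R$ from a positive-measure set of conjugates. Spanning is the part requiring work there (the equidistribution Lemma \ref{equidis}, via Ratner's theorem, guarantees that faces of every type of $P$ appear within bounded distance of a definite proportion of $P_n$). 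If you want to salvage a percolation-style approach, you would need the random object to be a locally finite, convex (or at least ``thick'') sub-polyhedron rather than the boundary of a Bernoulli-random union of tiles, and you would need a quantitative statement replacing Lemma \ref{injrad_large} to certify non-membership.
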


As above, this implies that there are infinitely many lattices in $\PO(d, 1)$, $d=3, 4$, which contain faithful spanning IRSs. We expect a straightforward generalisation to finite-volume right-angled polyhedra (we currently use an argument with convex envelopes which applies only to compact polyhedra but using thick parts it should work for general finite-volume polyhedra). A more interesting generalisation would be to all hyperbolic right-angled Coxeter groups.

%
%
%

\subsection*{Organisation}

In the first section \ref{sec:shift_IRS} we give a general construction of reflexive IRSs in Coxeter groups, which is a variation on that given in \cite[Section 13]{7s2}, and give conditions for it to result in diffuse IRSs. We then apply this in section \ref{sec:fuchsian} to Fuchsian groups to prove Theorem \ref{main1} and in Section \ref{sec:right_angled} to right-angled groups to prove Theorem \ref{main2}. Finally, in section \ref{sec:faithful} we prove Theorem \ref{faithful}, recalling there the necessary preliminaries from \cite{Glasner_Hase}. 


\subsection*{Acknowledgments} Supported by the Agence nationale de la recherche through grant AGDE - ANR-20-CE40-0010-01.

I am grateful to Simon Thomas and Egemen \c Ceken for comments on a previous version of this paper. 


%
%

\section{Reflective IRS over a shift} \label{sec:shift_IRS}

\subsection{Construction}

Let $P$ be a finite Coxeter polyhedron. Assume that we can find a pair of finite Coxeter polyhedron $\wdh P_i$ both of which are tiled by $P$ (i.e. each $\Gamma_{\wdh P_i}$ is a finite-index subgroup of $\Gamma_P$) and for each $i$ faces $F_i^\pm$ of $\wdh P_i$ such that $F_1^\pm$ is a polyhedron isometric to $F_2^\mp$. We also ask that the dihedral angles with the faces adjacent to $F_i^{\pm}$ are of the form $\tfrac\pi{2m}$ so that gluing the $\wdh P_i$ along these faces results in a Coxeter polyhedron.

Then for a sequence $a = (a_n)_{n\in \ZZ} \in \{1, 2\}^\ZZ$ we can form the infinite polyhedron $P_a$ obtained by gluing the $F_{a_i}^+$-face of $\wdh P_{a_i}$ to the $F_{a_{i+1}}^-$-face of $\wdh P_{a_{i+1}}$ for all $i \in \ZZ$.

For a subgroup $H$ of $\Gamma$ we denote by $[H]$ its conjugacy class ; we note that the conjugacy class $[\Gamma_{P_a}]$ is well-defined. We have the following observation. 

\begin{lem} \label{chabclosedPa}
  If $A$ is a shift-invariant closed subset of $\{1, 2\}^\ZZ$ then the subset $\bigcup_{a \in A} [\Gamma_{P_a}]$ is Chabauty-closed. 
\end{lem}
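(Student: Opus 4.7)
The plan is to take a Chabauty-convergent sequence $H_n \to H$ in $\bigcup_{a \in A} [\Gamma_{P_a}]$ and exhibit $\gamma \in \Gamma_P$ and $a \in A$ with $H = \gamma \Gamma_{P_a} \gamma^{-1}$. Writing $H_n = \gamma_n \Gamma_{P_{a_n}} \gamma_n^{-1}$ for $a_n \in A$, $\gamma_n \in \Gamma_P$, the geometric picture is that $H_n$ is the reflection group of the bi-infinite ``tube'' $\gamma_n P_{a_n} \subset \HH^d$, itself a union of $\Gamma_P$-translates of $\wdh P_1, \wdh P_2$ indexed by $i \in \ZZ$.

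The key source of flexibility is the shift-invariance of $A$: for every $a \in \{1,2\}^\ZZ$ and $k \in \ZZ$ there is $\phi_k^a \in \Gamma_P$ (the isometry implementing the $k$-shift of the tile indexing of $P_a$) satisfying $\phi_k^a \Gamma_{P_a} (\phi_k^a)^{-1} = \Gamma_{P_{\sigma^k a}}$. Replacing $(\gamma_n, a_n)$ by $(\gamma_n (\phi_{k_n}^{a_n})^{-1}, \sigma^{k_n} a_n)$ leaves $H_n$ unchanged and keeps $a_n$ in $A$. I use this to choose $k_n$ so that the $0$-th tile of the normalized tube lies among the $P$-tiles nearest a fixed basepoint $x_0 \in \HH^d$. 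Since $\Gamma_P$ acts properly and the stabilizer of any single $P$-tile is finite, passing to a subsequence yields $\gamma_n = \gamma$ constant; using compactness of $A$ in $\{1,2\}^\ZZ$, pass further so that $a_n \to a \in A$. After conjugating by $\gamma^{-1}$ the remaining task is to show $\Gamma_{P_{a_n}} \to \Gamma_{P_a}$ in Chabauty. This follows from a finite-support argument: for any window $[-k, k]$, $a_n$ agrees with $a$ on this window for $n$ large, so the corresponding central blocks of $P_{a_n}$ and $P_a$ coincide as subpolyhedra of $\HH^d$, and the reflections in the walls of these blocks are common elements of $\Gamma_{P_{a_n}}$ and $\Gamma_{P_a}$. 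Any element of $\Gamma_{P_a}$ is a finite word in reflections supported in some bounded window, hence eventually lies in $\Gamma_{P_{a_n}}$; conversely, an element outside $\Gamma_{P_a}$ breaks the $P_a$-tiling on some bounded region and therefore eventually lies outside $\Gamma_{P_{a_n}}$.

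The main obstacle is the normalization step. The shift can only re-index tiles along the tube's axis, so if the tubes $\gamma_n P_{a_n}$ drift to infinity transverse to their axis, no choice of $k_n$ brings the $0$-th tile near $x_0$ and $\gamma_n$ cannot be controlled in this way. In that regime the Chabauty limit degenerates to the trivial subgroup, which carries no mass for any IRS of interest but is not a priori in $\bigcup_{a \in A} [\Gamma_{P_a}]$; the argument therefore needs either to admit this degenerate limit implicitly or to supply a geometric argument showing that a non-trivial Chabauty limit $H$ forces at least one tile of each $\gamma_n P_{a_n}$ to stay near $x_0$ (for instance, by producing an explicit wall that must be contained in $\gamma_n P_{a_n}$ for $n$ large from a reflection lying in $H$).
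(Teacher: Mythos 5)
The paper states this lemma as an ``observation'' and gives no proof, so there is nothing to compare your argument against; I will assess it on its own terms. Your overall strategy --- normalize the tubes, extract a convergent subsequence of sequences $a_n \to a$ in the compact set $A$, and deduce Chabauty convergence $\Gamma_{P_{a_n}} \to \Gamma_{P_a}$ from eventual agreement of the wall patterns on bounded windows --- is the right one, and the local-convergence step is essentially correct (to make the converse inclusion precise: whether a fixed $g$ with $d(x_0, g x_0) = R$ belongs to $\Gamma_{P_{a_n}}$ is determined by the wall pattern of the $P_{a_n}$-tiling in the ball $B(x_0, R + \mathrm{const})$, via the standard gallery argument writing $g$ as a product of reflections in the walls crossed by a geodesic from $x_0$ to $g x_0$).

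The genuine gap is the one you flag yourself and leave open: ruling out transverse drift, i.e.\ showing that the tubes cannot escape from $x_0$ and degenerate to the trivial subgroup. This must be closed, and the shift re-indexing cannot do it, as you observe. The missing observation is that $P_{a_n}$ is a fundamental domain for $\Gamma_{P_{a_n}}$ acting on $\HH^d$: the translates of $\gamma_n P_{a_n}$ under $H_n = \gamma_n \Gamma_{P_{a_n}} \gamma_n^{-1}$ tile all of $\HH^d$, and each such translate $Q = h \gamma_n P_{a_n}$ (with $h \in H_n$) satisfies $\Gamma_Q = h H_n h^{-1} = H_n$. So you may always replace the representative tube by the tile of this tiling containing $x_0$; this changes neither $H_n$ nor the combinatorial type $a_n$ (transport the indexing by the isometry $h\gamma_n$). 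After this replacement $x_0$ lies in a $P$-tile of the tube, that $P$-tile is one of the finitely many $\delta P$ with $x_0 \in \delta P$, and its position inside its block takes finitely many values, so your finiteness and shift arguments apply and produce a constant $\gamma$ on a subsequence. This also shows the trivial group is not in the closure: every point of such a tube lies within $\max_i \operatorname{diam}(\wdh P_i)$ of a face of the tube, so every $H_n$ contains a reflection moving $x_0$ a uniformly bounded amount, and by finiteness of that ball in $\Gamma_P$ some fixed nontrivial element survives into the limit. With this insertion your proof is complete.
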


Let $n_i = [\Gamma_P : \Gamma_{P_i}]$. Let $\nu$ be a shift-invariant Borel probability measure on $\{1, 2\}^\ZZ$, let
\[
t_\nu = \PP_\nu(a_0=1)
\]
and define another probability measure $\nu'$ as follows : 
\[
\nu'(A) =  \frac1{t_\nu n_1+(1-t_\nu)n_2}\int_A n_{a_0}d\nu(a). 
\]
Now we define a random subgroup $\mu_n\nu$ of $\Gamma_P$ as follows: let $a$ be a $\nu'$-random sequence. The polyhedron $P_a$ is $P$-tiled ; we choose a uniformly random copy of $P$ inside the finite polyhedron $P_{a_0}$ and choose the corresponding conjugate of $\Gamma_{P_a}$ in $\Gamma_P$ as our random subgroup.

\begin{lem}
  Let $\nu$ be a shift-invariant Borel probability measure on $\{1, 2\}^\ZZ$. The random subgroup $\mu_\nu$ is invariant. 
\end{lem}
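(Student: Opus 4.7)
My plan is to reduce to checking invariance under the generators $s_f$ of $\Gamma_P$ (where $f$ ranges over the faces of $P$), and for each such generator to exhibit an explicit measure-preserving involution on the parameter space of $\mu_\nu$ that implements the corresponding conjugation. The first step is to repackage the measure: let $\Omega$ be the set of pairs $(a, i)$ with $a \in \{1, 2\}^\ZZ$ and $i$ indexing the $n_{a_0}$ copies of $P$ inside $\wdh P_{a_0}$, and let $\pi : \Omega \to \sub(\Gamma_P)$ send $(a, i)$ to the subgroup $H(a, i)$ obtained as the stabilizer of the embedding of $P_a$ into $\HH^d$ that sends the $i$-th tile of $\wdh P_{a_0}$ onto the fundamental tile $P$. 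Setting $Z = t_\nu n_1 + (1 - t_\nu)n_2$, the size-biasing factor $n_{a_0}$ appearing in $\nu'$ cancels exactly the $1/n_{a_0}$ coming from the uniform choice of $i$, so $\mu_\nu = \pi_* m$, where $m$ is the measure on $\Omega$ of constant density $1/Z$ with respect to $\nu$ (on $a$) times counting measure (on $i$).

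For each face $f$ of $P$ I will construct an involution $\phi_f$ of $\Omega$ satisfying $H \circ \phi_f = s_f \,H(\cdot)\, s_f^{-1}$. Let $P' = s_f(P)$ be the $P$-tile adjacent to $P$ across $f$. There are three mutually exclusive and exhaustive cases, depending on where $P'$ sits in the embedding: (i) $f$ is an interior face of $\wdh P_{a_0}$ and $P'$ is another tile $T_j$ of $\wdh P_{a_0}$, in which case I set $\phi_f(a, i) = (a, j)$; (ii) $f$ lies inside the $F^{\pm}$-face of $\wdh P_{a_0}$ and $P'$ lies in $\wdh P_{a_{\pm 1}}$, in which case, letting $j$ be the index of $P'$ inside $\wdh P_{a_{\pm 1}} = \wdh P_{(\sigma^{\pm 1} a)_0}$ (where $\sigma$ is the shift), I set $\phi_f(a, i) = (\sigma^{\pm 1} a, j)$; (iii) $f$ lies on some other boundary face of the embedded $P_a$, so that $s_f \in H(a, i)$, and I set $\phi_f(a, i) = (a, i)$. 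The identity $H \circ \phi_f = s_f H(\cdot) s_f^{-1}$ and the involutivity $\phi_f^2 = \mathrm{id}$ both come from unpacking the gluing construction of $P_a$: in each case $s_f$ applied to the embedding of $P_a$ with base tile $T_i$ gives the embedding of $P_{\sigma^{\pm 1} a}$ (or $P_a$) with base tile $T_j$, and iterating $\phi_f$ returns to the starting pair since $P$ is the neighbor of $P'$ across $f$.

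Measure preservation is then straightforward: in cases (i) and (iii) the first coordinate $a$ is fixed so $m$ is preserved trivially on the relevant $a$-slice; in case (ii) the shift $\sigma^{\pm 1}$ enters, but the density of $m$ is constant in $i$ and $\nu$ is shift-invariant, so $dm(\phi_f(a, i)) = \frac{1}{Z}\, d\nu(\sigma^{\pm 1} a) = \frac{1}{Z}\, d\nu(a) = dm(a, i)$. Hence $\phi_{f, *} m = m$, and for every Borel set $E \subset \sub(\Gamma_P)$ one gets $\mu_\nu(s_f E s_f^{-1}) = m(\phi_f(\pi^{-1}(E))) = m(\pi^{-1}(E)) = \mu_\nu(E)$. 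Since the $s_f$ generate $\Gamma_P$, this gives invariance of $\mu_\nu$ under all of $\Gamma_P$.

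The main obstacle I anticipate is not in the measure-theoretic step---which comes out cleanly precisely because of the size-biasing built into $\nu'$---but in the combinatorial bookkeeping needed in case (ii): tracking tile indices correctly under the shift and checking that involutivity really holds upon iteration. This amounts to unpacking the definition of $P_a$ and the gluing along the $F^\pm$ faces, and requires care rather than ingenuity.
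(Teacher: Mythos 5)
Your proof is correct, but it takes a genuinely different route from the one in the paper. The paper's argument proceeds by weak approximation: it first treats the case where $\nu$ is supported on a periodic orbit with period $b$, identifying $\mu_\nu$ there with the uniform measure on the finite conjugacy class of $\Gamma_{P_b}$ (via the $D_\infty$-cover of $\wdh P_b$), and then obtains the general case by writing $\nu$ as a weak limit of periodic measures and using that a weak limit of IRSs is an IRS. Your argument is instead a direct verification: you lift $\mu_\nu$ to a measure $m$ of constant density on the parameter space of pairs (sequence, base tile) --- observing that the size-biasing factor $n_{a_0}$ in $\nu'$ exactly cancels the $1/n_{a_0}$ from the uniform tile choice --- and then implement conjugation by each Coxeter generator $s_f$ as a measure-preserving involution of that space, with the shift-invariance of $\nu$ absorbing the cases where the adjacent tile lies in a neighbouring block. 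This is essentially the ``more direct argument'' that the paper only alludes to by citing \cite[Lemma 13.4]{7s2} rather than carrying out. What your approach buys is self-containedness and a conceptual explanation of why $\nu'$ is defined with the weight $n_{a_0}$; what the paper's approach buys is brevity, at the cost of relying on the standard compactness/limit facts about IRSs and on correctly identifying the periodic case. The case analysis in your step (ii) does require the tile-index bookkeeping you flag (in particular that a tile of the $0$-th block can only be adjacent to tiles of blocks $0$ or $\pm 1$, and that involutivity holds because $s_f^2=1$ matches the swap of adjacent tiles), but this all checks out.
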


\begin{proof}
  The simplest argument is by showing that the measure $\mu_\nu$ is a weak limit of finitely-supported IRSs. 

  Assume first that $\nu$ is periodic and has period a finite sequence $b = (b_1, \ldots, b_n)$, the orbifold $P_\alpha$ is the finitely-supported IRS corresponding to the $D_\infty$-cover of $\wdh P_b$ obtained by sending the two reflections associated with the faces $F_{b_1}^-$, $F_{b_n}^+$ to the generators of $D_\infty$ and the other reflection generators of $\Gamma_{\wdh P_b}$ to the identity. Let $a = (\cdots b,b,\cdots)$, then the normaliser of $\Gamma_{P_a}$ in $\Gamma_P$ contains $\Gamma_{\wdh P_b}$ so that we can choose a random conjugate of $\Gamma_{P_a}$ by picking a uniformly random element of $\Gamma_P/\Gamma_{\wdh P_b}$. Geometrically this means uniformly picking a random translate of $P$ in $\wdh P_b$, which amounts to picking $1 \le i \le n$ according to the distribution with weight $\frac{n_i}{n_1 + \cdots +n_n}$ at $i$, and then picking a random translate of $P$ inside $P_{b_i}$. This is exactly the definition of $\mu_\nu$ in this case, hence it is the uniform distribution on the conjugacy class of $\Gamma_{P_b}$. 

  In general, the measure $\nu$ itself is a weak limit of periodic measures $\nu_n$. It is immediate to verify that the measure $\mu_\nu$ is then the weak limit of the IRSs $\mu_{\nu_n}$, hence it is invariant itself. 

  For a more direct argument which can be adapted to our situation see \cite[Lemma 13.4]{7s2}. 
\end{proof}

\begin{lem}
  If $\nu$ is shift-ergodic then $\mu_\nu$ is an ergodic IRS.
\end{lem}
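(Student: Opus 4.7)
The plan is to reduce ergodicity of $\mu_\nu$ to shift-ergodicity of $\nu$. For this it suffices to show that every bounded Borel $\Gamma_P$-invariant function $f$ on $\sub(\Gamma_P)$ is $\mu_\nu$-almost surely constant. The strategy is to pull back such an $f$, via the parametrization $(a, i) \mapsto \Gamma_{(a, i)}$ underlying the construction, to a shift-invariant function on $\{1, 2\}^\ZZ$; here $\Gamma_{(a, i)}$ denotes the subgroup obtained from the sequence $a$ by taking the $i$-th $P$-tile of $\wdh P_{a_0}$ as basepoint.

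Step 1 is to show that $F(a, i) := f(\Gamma_{(a, i)})$ is independent of $i$. The reflection group $\Gamma_{\wdh P_{a_0}}$ sits inside $\Gamma_P$ and acts simply transitively on the $n_{a_0}$ copies of $P$ filling $\wdh P_{a_0}$, so distinct choices of $i$ yield $\Gamma_P$-conjugate subgroups, and $F(a, i) = F(a, j) =: F(a)$ by $\Gamma_P$-invariance of $f$.

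Step 2 is to verify that $F$, viewed as a function on $\{1, 2\}^\ZZ$, is shift-invariant. In the standard embedding of $P_a$ with $\wdh P_{a_0}$ placed at the reference position, the adjacent block $\wdh P_{a_1}$ occupies a placement related to the reference placement of $\wdh P_{a_1}$ by a unique element $g \in \Gamma_P$; indeed, any two placements of $\wdh P_{a_1}$ in $\HH^d$ are related by a $\Gamma_P$-element since $\wdh P_{a_1}$ is itself tiled by $P$. Conjugation by $g$ then identifies the reflection group of the standard $P_a$ with that of the standard $P_{\sigma a}$, so $F(a) = F(\sigma a)$; the argument for $\sigma^{-1}$ is symmetric.

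Finally, since $\nu$ is shift-ergodic, $F$ is $\nu$-almost surely constant. The Radon--Nikodym derivative $d\nu'/d\nu(a) = n_{a_0}/(t_\nu n_1 + (1-t_\nu) n_2)$ is bounded above and away from zero, so $\nu \sim \nu'$ and $F$ is $\nu'$-almost surely constant, which yields that $f$ is $\mu_\nu$-almost surely constant. The main subtlety is the verification in Step 2 that $g$ truly lies in $\Gamma_P$ and not merely in $\isom(\HH^d)$; this is precisely what the standing hypothesis that each $\wdh P_i$ is $P$-tiled provides, since it implies that every placement of $\wdh P_{a_1}$ arising from the construction is a $\Gamma_P$-translate of the reference one.
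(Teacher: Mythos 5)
Your proof is correct and follows essentially the same route as the paper: both reduce ergodicity of $\mu_\nu$ to shift-ergodicity of $\nu$ by pulling conjugation-invariant data on $\sub(\Gamma_P)$ back to shift-invariant data on $\{1,2\}^\ZZ$ via the parametrization $(a,i)\mapsto\Gamma_{(a,i)}$, the paper phrasing this with invariant sets and you with invariant functions. One small slip: in Step 1 it is $\Gamma_P$ (not $\Gamma_{\wdh P_{a_0}}$, which does not preserve $\wdh P_{a_0}$) that acts simply transitively on the $P$-tiles, so the $\Gamma_{(a,i)}$ are $\Gamma_P$-conjugate for that reason; the conclusion is unaffected.
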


\begin{proof}
  
  Let $\wdt A$ be a conjugacy-invariant Borel subset in the support of $\mu_\nu$ and suppose that $\mu_\nu(\wdt A) > 0$. Then by Lemma \ref{chabclosedPa} all subgroups in $\wdt A$ are of the form $\Gamma_{P_a}$ for some $a$ in the support of $\nu$. Let $A = \{ a : [\Gamma_{P_a}] \subset \wdt A \}$. Then $A$ is shift-invariant and Borel. If $\nu(A) = 0$ then $\mu_\nu(\wdt A) = 0$ as cen be seen from the definition of $\mu_\nu$. So we must have $\nu(A) = 1$, and it follows that $\wdt A$ has full measure. 
\end{proof}


\subsection{Rigidity conditions for diffuseness}

We fix a Coxeter polyhedron $P$ in $\HH^d$ as in the previous section. 

\begin{lem} \label{criterion}
  If :
  \begin{enumerate}
  \item \label{rigidity_Pa} the polyhedra $P_a$ for $a \in \supp(\nu)$ satisfy that $\Gamma_{P_a}$ is isomorphic to $\Gamma_{P_{a'}}$ if and only if $P_a$ is isometric to $P_{a'}$ ;

  \item \label{recognition} the normaliser of $\Gamma_{P_a}$ in $\Gamma_P$ is finite unless $a$ is shift-periodic 
  \end{enumerate}
  and $\nu$ is a shift-invariant ergodic measure on $\{1, 2\}^\ZZ$ which is not supported on a finite shift-orbit then $\mu_{\nu}$ is diffuse. 
\end{lem}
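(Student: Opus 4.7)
My plan is to bound $\mu_\nu(\{H\cong H_0\})$ by an auxiliary $\nu$-measure and then show this measure vanishes. Fix $H_0\in\supp(\mu_\nu)$, which by the construction equals $g_0\Gamma_{P_{a_0}}g_0^{-1}$ for some $a_0\in\supp(\nu)$ and $g_0\in\Gamma_P$. A $\mu_\nu$-random subgroup $H$ has the form $g\Gamma_{P_a}g^{-1}$ with $a\sim\nu'$, so its isomorphism type is that of $\Gamma_{P_a}$. Hypothesis (1) gives $\Gamma_{P_a}\cong\Gamma_{P_{a_0}}$ if and only if $P_a$ is isometric to $P_{a_0}$; hence, with $E:=\{a : P_a \text{ isometric to } P_{a_0}\}$, one has $\mu_\nu(\{H\cong H_0\})\le\nu'(E)$. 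Since $\nu'\ll\nu$ with bounded density, it is enough to prove $\nu(E)=0$.

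Two ingredients are then needed. First, $\nu$ is non-atomic: an atom would yield a positive-measure shift-invariant orbit, so by ergodicity $\nu$ would concentrate on that orbit; a finite orbit is excluded by hypothesis, and an infinite orbit cannot support a probability measure by shift-invariance. Second, $E$ is at most countable. The polyhedron $P_a$ is a bi-infinite chain of Coxeter pieces $\wdh P_{a_n}$ glued along the faces $F_{a_n}^+$ and $F_{a_{n+1}}^-$; any isometry $P_a\to P_{a_0}$ must preserve the linear adjacency of the pieces and so induces a map $n\mapsto\pm n+k$ on the indexing set $\ZZ$. This forces $a$ to lie in the shift-orbit of $a_0$ or of its reversal, a countable union. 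Hypothesis (2) rules out extra hidden self-symmetries in the aperiodic case, where the finiteness of $N_{\Gamma_P}(\Gamma_{P_a})$ prevents an unbounded family of isometries from widening $E$; for shift-periodic $a_0$ the orbit is already finite and has $\nu$-measure zero by the hypothesis on $\nu$. Thus $E$ is countable, non-atomicity gives $\nu(E)=0$, and diffuseness follows.

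The step I expect to be most delicate is the rigidity claim that $E$ is countable. The chain-combinatorics argument is intuitive but must be made airtight: one needs to check that no accidental isometry between the pieces $\wdh P_1$ and $\wdh P_2$, nor any symmetry swapping $F^+$ and $F^-$, can produce extra $a'$ outside the shift and reverse-shift orbits of $a_0$ yielding $P_{a'}\cong P_{a_0}$. Hypothesis (2) is precisely the safety net here: finiteness of the normaliser precludes an unbounded family of self-isometries of $P_a$, which is the only mechanism through which $E$ could be blown up to uncountable size. Once this rigidity is in place, the rest is routine measure-theoretic bookkeeping.
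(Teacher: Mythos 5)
Your overall strategy (reduce to showing $\nu'(E)=0$ for $E=\{a: P_a \text{ isometric to } P_{a_0}\}$, note $\nu'\ll\nu$ with bounded density, and prove $\nu$ is non-atomic) is sound, and the non-atomicity argument is correct. The gap is in the claim that $E$ is at most countable. You justify it by asserting that any isometry $P_a\to P_{a_0}$ must preserve the decomposition into the pieces $\wdh P_{a_n}$ and hence induce a map $n\mapsto \pm n+k$ of the index set. Nothing in hypotheses \eqref{rigidity_Pa} or \eqref{recognition} gives you this: it is a combinatorial rigidity statement about the specific gluing data $(\wdh P_1,\wdh P_2,F_i^\pm)$, and it is exactly what the paper has to prove separately, by hand, in each application (the lemmas in Sections \ref{sec:fuchsian} and \ref{sec:right_angled} asserting that $P_a\cong P_{a'}$ forces $a=a'$ up to shift). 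At the level of generality of Lemma \ref{criterion} an isometry of $P_a$ need not respect the chain structure, and the map $a\mapsto P_a$ need not even be countable-to-one (e.g.\ if $\wdh P_2$ happened to be a union of two copies of $\wdh P_1$ glued along the distinguished faces, every $a$ would give the same polyhedron). Your appeal to hypothesis \eqref{recognition} as a ``safety net'' conflates two different things: the normaliser of $\Gamma_{P_a}$ in $\Gamma_P$ controls self-conjugations realised inside the lattice, not the number of sequences $a'$ producing a polyhedron isometric to $P_a$ via an arbitrary isometry of $\HH^d$.

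The paper closes this gap by a different route that you are missing. Condition \eqref{rigidity_Pa} places each isomorphism class of $\supp(\mu_\nu)$ inside a single $\PO(d,1)$-conjugacy class of subgroups; Lemma \ref{countable2} (there are only countably many elements of a semisimple Lie group conjugating one Zariski-dense discrete subgroup into another) shows this class meets $\sub(\Gamma_P)$ in at most countably many $\Gamma_P$-conjugacy classes. If an isomorphism class had positive $\mu_\nu$-measure, some single $\Gamma_P$-conjugacy class would too; being a $\Gamma_P$-invariant set, by ergodicity it would carry full measure, and a conjugation-invariant probability measure on a single conjugacy class forces that class to be finite, i.e.\ the normaliser to have finite (hence infinite cardinality) index. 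Only at this final step does condition \eqref{recognition} enter: it forces the relevant $a$ to be shift-periodic, so $\nu$ would be carried by the countable set of periodic sequences and, by ergodicity, by a finite orbit --- contradicting the hypothesis on $\nu$. This argument works entirely with conjugacy classes of subgroups and never needs the fibres of $a\mapsto P_a$ to be countable. To repair your proof you would either need to import Lemma \ref{countable2} and rerun the paper's conjugacy-class argument, or add the chain-rigidity of $P_a$ as an explicit extra hypothesis (which is how the applications actually verify the lemma's conditions).
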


\begin{proof}
  The support of $\mu_\nu$ is contained in $\{\Gamma_{P_a} :\: a \in \supp(\nu)\}$. Hence by condition \ref{rigidity_Pa} each isomorphism class in $\supp(\mu_\nu)$ is contained in a $\PO(d, 1)$-conjugacy class. By Lemma \ref{countable2} below this class splits into at most countably many $\Gamma$-conjugacy classes. Hence, if an isomorphism class had positive $\mu_\nu$-measure then some $\Gamma$-conjugacy class would as well. By ergodicity of $\mu_\nu$, it follows that $\mu_\nu$ would have to be supported on a single finite conjugacy class. Finally condition \ref{recognition} implies that this can happen if and only if $\nu$ is supported on a finite orbit. 
\end{proof}


\begin{lem} \label{countable2}
  Let $G$ be a semisimple Lie group and $\Gamma, \Lambda \subset G$ a pair of Zariski-dense discrete subgroups of $G$. There are at most countably many elements of $G$ conjugating $\Lambda$ into $\Gamma$. 
\end{lem}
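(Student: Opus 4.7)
The plan is to factor the set of conjugators through the set of injective homomorphisms $\Lambda \to \Gamma$, which is countable since $\Lambda$ is countable and $\Gamma$ is countable (both being discrete in a second-countable Lie group). To each $g \in G$ with $g\Lambda g^{-1} \subset \Gamma$ we associate the injective homomorphism $\phi_g : \Lambda \to \Gamma$ given by $\phi_g(\lambda) = g\lambda g^{-1}$. It therefore suffices to show that for a fixed $\phi$, the fibre $\{g : \phi_g = \phi\}$ is countable.

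The key observation is that if $\phi_g = \phi_{g'}$, then $g'^{-1}g$ commutes with every element of $\Lambda$, so it lies in the centraliser $Z_G(\Lambda)$. By Zariski-density of $\Lambda$ in $G$, any element of $G$ centralising $\Lambda$ centralises the Zariski closure $G$, so $Z_G(\Lambda) = Z(G)$. Consequently each non-empty fibre is a single left coset of $Z(G)$.

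It remains to check that $Z(G)$ is countable. Since $G$ is semisimple, its Lie algebra has trivial centre, so $Z(G)$ is a discrete subgroup of $G$. A discrete subgroup of a second-countable Hausdorff topological group is at most countable, and Lie groups are second countable, so $Z(G)$ is countable. Combining the three steps, the set of conjugators is a countable union (over homomorphisms $\Lambda \to \Gamma$) of countable cosets of $Z(G)$, hence countable.

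The only mildly subtle point is the identification $Z_G(\Lambda) = Z(G)$, which uses Zariski-density in an essential way; without this hypothesis the statement would fail (consider $\Lambda$ contained in a positive-dimensional torus). Everything else is formal.
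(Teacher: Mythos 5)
Your overall strategy is the same as the paper's: reduce everything to the fact that the ambiguity in a conjugator $g$ is measured by $Z_G(\Lambda)$, which equals $Z(G)$ by Zariski-density and is discrete, hence countable, by semisimplicity. Those steps of your argument are correct and are exactly what the paper uses. The gap is in your first step: the set of (injective) homomorphisms $\Lambda\to\Gamma$ between two countable groups is \emph{not} countable in general when $\Lambda$ is not finitely generated --- it is a subset of $\Gamma^{\Lambda}$, and for instance a discrete free group $\Lambda=\Gamma$ of countably infinite rank (which can be taken Zariski-dense in $\mathrm{PSL}_2(\mathbb{R})$, e.g.\ an infinite-rank Schottky group) admits uncountably many injective endomorphisms. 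This matters here because the lemma does not assume $\Lambda$ finitely generated, and in the paper's application $\Lambda=\Gamma_{P_a}$ is an infinitely generated Coxeter group. Since your fibres are countable, countability of the set of conjugators is \emph{equivalent} to countability of the image of $g\mapsto\phi_g$, so bounding that image by all of $\mathrm{Hom}_{\mathrm{inj}}(\Lambda,\Gamma)$ does not close the argument: you have not exhibited a countable set containing the conjugators.

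The fix is standard and preserves your structure. By the descending chain condition on Zariski-closed subgroups of $G$, there are finitely many $\lambda_1,\dots,\lambda_k\in\Lambda$ with $\bigcap_{i} Z_G(\lambda_i)=Z_G(\Lambda)=Z(G)$ (equivalently, $\Lambda$ contains a finitely generated Zariski-dense subgroup). Replacing $\phi_g$ by the tuple $(g\lambda_1 g^{-1},\dots,g\lambda_k g^{-1})\in\Gamma^{k}$ gives a map into a genuinely countable target whose nonempty fibres are still single cosets of $Z(G)$, and you conclude as before. This is in substance the paper's proof: there the set of elements conjugating a single $g\in\Lambda$ into $\Gamma$ is written as a countable union of cosets of $Z_G(g)$ and one intersects over $g\in\Lambda$; the same finiteness input is what turns that intersection into a countable union of cosets of $Z(G)$. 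So your route is essentially the paper's, repackaged through homomorphisms, but the one assertion you use to get countability of the indexing set is false as stated and needs the Noetherian reduction above.
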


\begin{proof}
  For a given element $g \in G$ the set of elements conjugating $g$ into $\Lambda$ is a countable (possibly empty) union of $\Lambda$-cosets of the centraliser $Z_G(g)$. Since $\Lambda$ is Zariski-dense we have that $\bigcap_{g \in \Lambda} Z_G(g) = Z(G)$ is discrete, so the set of elements conjugating every element of $\Lambda$ into $\Lambda$ is 0-dimensional, in particular countable.
\end{proof}


\section{Diffuse IRSs in Fuchsian groups} \label{sec:fuchsian}

Let :
\begin{itemize}
\item $T$ be the equilateral hyperbolic triangle with angles $\tfrac\pi 4$ ;

\item $O$ the regular hyperbolic right-angled octagon (which is tiled by 8 copies of $T$, see figure \ref{octagon} below) ;

\item $Q$ the half-octagon (see figure \ref{half_octagon}).  
\end{itemize}

Let $\wdh P_1 = O$, and $\wdh P_2$ the polygon obtained by gluing a copy of $Q$ to the a face of $O$ along a face adjacent to two right angles, and copies of $O$ of two opposite faces disjoint from the glued $Q$ (there is only one way of doing so up to isometry, see figure \ref{P2_fig}).

\begin{figure}[h]
  \begin{subfigure}{.3\textwidth}
    \includegraphics[width=\textwidth]{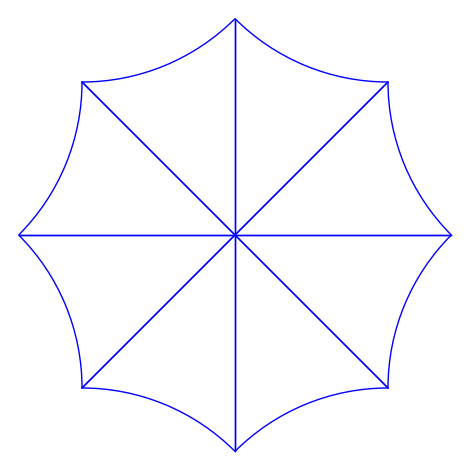}
    \caption{Regular octogon tesselated by triangles}
    \label{octagon}
  \end{subfigure}
  \hspace{1cm}
  \begin{subfigure}{.3\textwidth}
    \includegraphics[width=\textwidth]{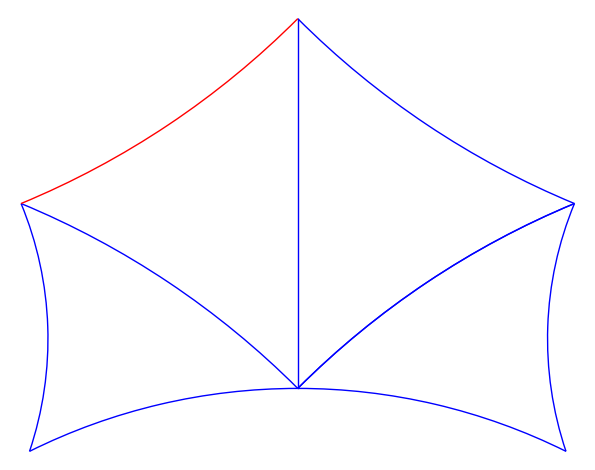}
    \caption{Half-octagon tesselated by triangles}
    \label{half_octagon}
  \end{subfigure}
\end{figure}

\begin{figure}[h]
  \includegraphics[width=.5\textwidth]{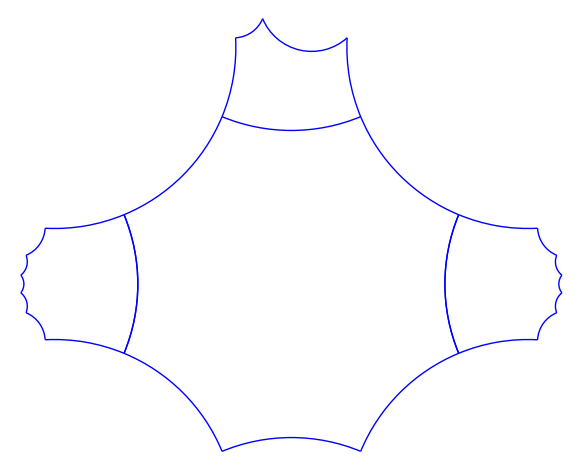}
  \caption{The polygon $\wdh P_2$}
  \label{P2_fig}
\end{figure}



\subsection{Rigidity}

The following lemma implies \eqref{recognition} from Lemma \ref{criterion} in our case. 

\begin{lem} \label{recog_fuchsian}
  With $\wdh P_1,\, \wdh P_2$ as above, $P_a$ admits a non-trivial self-isometry if and only if $a$ is shift-periodic.  
\end{lem}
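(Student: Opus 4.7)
The forward direction is immediate: if $a$ is shift-periodic with period $k > 0$, then translation along the geodesic axis $L$ of $P_a$ by the distance spanning $k$ consecutive blocks is a non-trivial self-isometry. My focus will therefore be on the converse.

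Let $\phi$ be an arbitrary non-trivial self-isometry of $P_a$. The first step is to show $\phi$ preserves $L$: since $P_a$ accumulates on exactly two points of $\partial_\infty \HH^2$, namely the two ends of its infinite block chain, $\phi$ permutes these two ideal points and hence preserves the geodesic $L$ joining them. I will then invoke the classification of isometries of $\HH^2$ preserving $L$ setwise: translations along $L$, the reflection across $L$, $\pi$-rotations about points of $L$, and reflections across geodesics perpendicular to $L$. By construction the $Q$-bulges of every $\wdh P_2$ block in $P_a$ lie on a single fixed side of $L$; since the reflection across $L$ and all $\pi$-rotations exchange the two sides of $L$, they are excluded unless $a$ is the constant sequence $1$, which is trivially periodic. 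A translation along $L$ preserving $P_a$ must shift the blocks by some integer $k$, yielding $a_{i+k}=a_i$ for all $i$ and hence shift-periodicity.

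The main obstacle will be the remaining case of reflections across geodesics perpendicular to $L$. Such a reflection preserves the two sides of $L$ but reverses the direction along $L$, so it maps each block to another block of the same type with the roles of the distinguished faces $F_{a_i}^+$ and $F_{a_i}^-$ exchanged. In particular, if $a_i = 2$ for some $i$, one needs an internal isometry of $\wdh P_2$ swapping $F_2^+$ and $F_2^-$, and the crux of the proof will be to show no such isometry exists. I plan to argue as follows: any such internal isometry must exchange the two outer copies of $O$ in $\wdh P_2$, so its restriction to the central octagon lies in the subgroup of the symmetry group of $O$ permuting the two faces to which the outer $O$'s are glued. This subgroup contains only two non-trivial elements: the $\pi$-rotation about the center of $O$, which sends the face bearing $Q$ to the opposite face (which bears no $Q$), and the reflection across the axis of $O$ perpendicular to the line through those two faces; the latter fixes the $Q$-bearing face setwise but extends to $\wdh P_2$ by reflecting $Q$ across the perpendicular bisector of the glued edge. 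Since $Q$ is a pentagonal half-octagon whose unique non-trivial symmetry axis passes through its apex vertex (perpendicular to its cut edge) and not through the midpoint of the glued edge, this reflection does not preserve $Q$ either. Both possibilities are thus ruled out, and the converse direction follows.
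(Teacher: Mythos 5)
Your strategy (pass to the two ideal endpoints of $P_a$, reduce to the classification of isometries of $\HH^2$ preserving the geodesic $L$ joining them) is genuinely different from the paper's, which never introduces $L$ at all: the paper works purely combinatorially on $\pl P_a$, observing that the angle-$\tfrac\pi4$ vertices occur in consecutive pairs, one pair per $\wdh P_2$-block, that the two vertices of such a pair have adjacent sides of different lengths (so no isometry can reverse a block), and then running an induction along the chain. Your outline could in principle be completed, but as written it has real gaps. First, every branch of your case analysis presupposes that a self-isometry of $P_a$ permutes the blocks $\wdh P_{a_i}$ (``a translation must shift the blocks by some integer $k$'', ``it maps each block to another block of the same type''). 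The block decomposition is an artifact of the construction, not obviously intrinsic to the convex region $P_a\subset\HH^2$; recovering it from the geometry of $P_a$ (the paper does this via the $\tfrac\pi4$-vertices and the angle sequence) is precisely the content you would need to supply before any of the cases can be run. Second, the assertion that ``by construction the $Q$-bulges of every $\wdh P_2$ block lie on a single fixed side of $L$'' is doing the work of excluding the reflection across $L$ and the $\pi$-rotations, but it is not justified and is not obviously true: it requires both that the chain is geodesically straight (so that $L$ is the common axis of all the blocks, rather than merely the geodesic between the two limit points) and that the edge-gluings $F^+_{a_i}\to F^-_{a_{i+1}}$ are orientation-consistent; neither is specified. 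If the bulges could alternate sides, a $\pi$-rotation would only force a palindromic condition $a_{c-i}=a_i$, which does not imply periodicity, so the lemma would fail at this step unless the direction-reversing isometries are killed by some other mechanism (the paper's chirality argument does exactly this, for all direction-reversing cases at once).

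Two further, smaller issues. Your list of isometries preserving $L$ setwise omits glide reflections along $L$; these also swap the sides of $L$ and must be treated (they turn out to force periodicity anyway, but the case is missing). And the claim that $P_a$ accumulates on exactly two ideal points, while true, needs a sentence of proof (e.g.\ convexity of $P_a$ together with the fact that every point of $P_a$ lies within $\operatorname{diam}(\wdh P_2)$ of $\pl P_a$, or the nested separating edges $F^+_{a_i}$). On the positive side, your analysis of the perpendicular reflections is essentially a reproof of the key fact the paper isolates in its footnote --- that $\wdh P_2$ admits no isometry exchanging $F_2^+$ and $F_2^-$ --- though your route through the symmetry group of the central octagon assumes without justification that the $Q$-bearing face lies on the perpendicular axis of $O$ (if it does not, the case is excluded even more easily) and that the decomposition of $\wdh P_2$ into its constituent octagons is preserved by any self-isometry. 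The paper's version of this step is shorter and more robust: the two $\tfrac\pi4$-vertices of $\wdh P_2$ have adjacent sides of different lengths, so no isometry whatsoever can exchange them.
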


\begin{proof}
  We may assume first that $a$ is not constantly 1 and let $l$ such that $a_l = 2$. Let $f \not= \id$ be a self-isometry of $P_a$. Let $x_1, x_2$ be the two vertices of $P_{a_l}$ with angle $\pi/4$. Then their images are consecutive vertices with angle $\pi/4$ on the boundary of $P_a$, hence they must belong to a single $P_{a_k}$ with $a_k = 2$. The polygon $P_2$ itself has no nontrivial self-isometry\footnote{Such an isometry would exchange the two vertices with angle $\tfrac \pi 4$ but their adjacent sides are of differet lengths. } so there is a unique isometry from $P_{a_l}$ to $P_{a_k}$. 

  The only isometries of $\HH^2$ sending a neighbourhood of each $x_i$ in $P_{a_l}$ to a neighbourhood of itself or the other $x_j$ is the identity or the reflection in the median between the two. The latter does not map $P_{a_l}$ inside $P_a$, hence we must have $f = f_0$. if $k=l$ it follows that $f=\id$, so we have $k \not= l$. 

  Now we prove that $a_{l+1} = a_{k+1}$ : assume that one of the two is equal to $2$ (otherwise they are both equal to 1 and we are finished). If $a_{l+1} = 2$ then it follows from the previous paragraph that $P_{a_{k+1}}$ and $f(P_{a_{l+1}})$ are both glued to the same edge of $P_{a_k}$. So after the first copy of $O$ there is are angles $\tfrac \pi 2$ and $\tfrac \pi 4$, which would be impossible if $a_{k+1} = 1$ (there would be at least 3 angles $\tfrac \pi 2$). Hence $a_{k+1} = 1$, and if we had started with $a_{k+1} = 1$ we would have proven in the same way that $a_{l+1} = 1$. Then a recursion argument shows that we must have $a_{n+k-l} = a_n$ for all $n$, that is $a$ has period $k-l \not= 0$. 
\end{proof}

If $P$ is a convex polygon in $\HH^2$ rooted at a point $x_0 \in P$ its angle sequences are the sequences in $[0,\pi[^\ZZ$ obtained by listing the angles at its vertices starting from the projections of the root $x_0$ on each boundary component, in counter-clockwise order (if $P$ is compact this is a single periodic sequence).

The following two lemmas show that the property \eqref{rigidity_Pa} in Lemma \ref{criterion} holds for the IRSs $\mu_\nu$ constructed from $\wdh P_1$ and $\wdh P_2$ as above. Together with Lemma \ref{recog_fuchsian} implying \eqref{recognition}, this shows that for a non-periodic $\nu$ the IRS $\mu_\nu$ provides an example proving Theorem \ref{main1}. 

\begin{lem}
  With $\wdh P_1,\, \wdh P_2$ as above, two polyhedra $P_a, P_{a'}$ are isometric to each other if and only if they have the same angle sequences up to shift. 
\end{lem}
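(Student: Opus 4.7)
The forward direction is immediate: any isometry $f \colon P_a \to P_{a'}$ carries boundary components to boundary components and preserves vertex angles, and if $x_0 \in P_a$ is a root then $f(x_0)$ plays the role of a root for $P_{a'}$. Since $f$ also commutes with orthogonal projection onto the boundary, the angle sequences of $P_a$ at $x_0$ and of $P_{a'}$ at $f(x_0)$ agree (possibly after reversing the counter-clockwise direction, if $f$ is orientation-reversing on $\HH^2$; this reversal may be folded into the shift equivalence by re-parametrizing the boundary in the opposite sense).

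For the backward direction my strategy is first to recover the sequence $a$ from the angle sequences of $P_a$, and then to show that $P_a$ is determined up to isometry of $\HH^2$ by $a$. For the first step, I would compute the contribution to the two boundary components of $P_a$ made by each type of block. A block $\wdh P_1 = O$ has its two opposite faces $F_1^\pm$ glued to neighbors, and since all angles of $O$ are $\pi/2$ the four shared-face endpoints become smooth boundary points (angle $\pi$); the remaining four vertices of $O$ contribute two right angles to each of the two boundary sides of $P_a$. A block $\wdh P_2$, built from three collinear copies of $O$ with a half-octagon $Q$ attached on one specific side, similarly contributes an all-right-angle pattern (of somewhat greater length) to its $Q$-free side, while on its $Q$-side the boundary picks up the non-glued vertices of $Q$, producing a characteristic finite word that contains both of the $\pi/4$ tip-angles of $Q$. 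Since the only vertices in all of $P_a$ with angle $\pi/4$ arise from these $Q$-tips, the locations of the $\wdh P_2$ blocks in $a$ can be read off directly from the positions of the $\pi/4$ entries in the angle sequence; the number of $\wdh P_1$ blocks between two consecutive $\wdh P_2$ blocks is then determined by the number of intervening right angles. Thus $a$ is recovered up to shift (and the evident ambiguity between the two boundary components, which corresponds to an orientation-reversal).

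The second step follows from the rigidity of the building blocks: each $\wdh P_{a_i}$ is an explicit polygon in $\HH^2$ determined up to isometry, the polygon $\wdh P_2$ has no non-trivial self-isometry by the footnote to Lemma \ref{recog_fuchsian}, and the faces $F_{a_i}^\pm$ are specific distinguished faces; the gluings along $F_{a_i}^+ = F_{a_{i+1}}^-$ are canonical once we require adjacent blocks to lie on opposite sides of the shared face. Hence $a$ (up to shift and reversal) determines $P_a$ up to isometry, which completes the backward implication. The main technical obstacle I anticipate is the bookkeeping in the block-parsing step: carefully cataloguing which vertices of each $\wdh P_{a_i}$ survive as honest boundary vertices of $P_a$ (rather than being smoothed out to angle $\pi$ at a glued face), and verifying that the resulting finite block-words cannot combine across block boundaries to imitate or disguise one another. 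This is straightforward given that $\pi/4$-angles only come from $Q$, but the verification is what makes the recovery of $a$ unambiguous.
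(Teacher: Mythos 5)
Your proposal is correct and follows essentially the same route as the paper: recover $a$ (up to shift) from the angle sequence by using the fact that $\pi/4$ angles occur only at the tips of the glued half-octagons, so they locate the $\wdh P_2$ blocks, and the count of intervening right angles determines the number of $\wdh P_1$ blocks between them; the paper then leaves the step ``$a$ determines $P_a$ up to isometry'' implicit. The only difference is that the paper carries out the bookkeeping you flag as the remaining obstacle, via the explicit relation $k_1 = (N_1-5)/2$ between the number $N_1$ of right angles separating consecutive $(4,4)$ segments and the number $k_1$ of intervening octagon blocks, which in particular shows the block-words cannot be confounded across boundaries.
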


\begin{proof}
  We show that the angle sequence determines $a$ up to shift. Now we may assume assume that the angle sequence contains a $(4,4)$ segment, and by shifting that $a_0 = 2$ and this segment corresponds to the two $\tfrac\pi 4$ angles in $P_{a_0}$. Let $N_1$ be the number of 2 in the angle sequence following $(4,4)$ and $N_{-1}$ the number of 2 preceding it. Then $k_1 = \tfrac{N_1-5}2$ is equal to the number of $i\ge 1$ such that $a_1 = \cdots = a_i = 1$, and $k_{-1} = \tfrac{N_{-1}-5}2$ the number of $i \le -1$ such that $a_{-1} = \cdots = a_i = 1$. Hence the angle sequence determines the number of $1$ in $a$ following and preceding a $2$, so it determines $a$ itself. 
\end{proof}

\begin{lem} \label{isomorphism}
  Let $\Gamma_1, \Gamma_2$ be reflection groups associated with polygons $P_1, P_2$ in $\HH^2$ all of whose angles are $\pi/2$ or $\pi/4$ and which have finitely many boundary components. If $\Gamma_1$ is isomorphic to $\Gamma_2$ then the angle sequences of $P_1$ and $P_2$ are shifted or mirrored from each other. 
\end{lem}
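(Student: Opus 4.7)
The plan is to extract, from any abstract isomorphism $\phi : \Gamma_1 \to \Gamma_2$, an isomorphism of labelled combinatorial graphs that encodes the boundary combinatorics of $P_1$ and $P_2$; the main obstacle will be the opening step of giving an algebraic characterization of the side-reflections inside $\Gamma_P$. I will show that, under the angle hypothesis, an involution $r \in \Gamma_P$ is $\Gamma_P$-conjugate to a side-reflection if and only if its centralizer is infinite. Every involution acts on $\HH^2$ as either a reflection in a geodesic or a rotation by $\pi$ about a point: the rotations by $\pi$ in $\Gamma_P$ fix a unique point of $\HH^2$ (a vertex of the tiling), so their centralizers are contained in the finite vertex stabilizer. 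Conversely, for a side-reflection $s_\sigma$ the centralizer is the $\Gamma_P$-stabilizer of the geodesic $\ell_\sigma$; I will show this stabilizer is infinite by exhibiting infinitely many commuting reflections: at each endpoint $v$ of $\sigma$ the rotation by $\pi$ about $v$ belongs to $\Gamma_P$, so the extension of $\sigma$ past $v$ is again a side of an adjacent tile with another vertex on $\ell_\sigma$; iterating yields infinitely many tiling-vertices on $\ell_\sigma$, and at each such vertex the local rotational symmetry (of order $4$ at a $\pi/4$-vertex, of order $2$ at a $\pi/2$-vertex) provides a wall perpendicular to $\ell_\sigma$, whose reflection commutes with $s_\sigma$.

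Next, I will observe that the maximal finite subgroups of $\Gamma_P$ are exactly the vertex stabilizers ($\ZZ/2 \times \ZZ/2$ at a $\pi/2$-vertex, $D_8$ at a $\pi/4$-vertex), and their isomorphism types distinguish the two angle types. Since all finite Coxeter labels of $\Gamma_P$ are even, no two distinct side-reflections are $\Gamma_P$-conjugate and no two distinct vertices of $P$ lie in the same $\Gamma_P$-orbit; hence the conjugacy classes of reflections (resp. of maximal finite subgroups) are in natural bijection with the sides (resp. vertices) of $P$. I will therefore define $\mathcal G(\Gamma_P)$ to have as vertices the conjugacy classes of maximal finite subgroups labelled by isomorphism type, and as edges the conjugacy classes of reflections, with each edge $[s]$ joining the two conjugacy classes of maximal finite subgroups containing a representative of $[s]$. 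The previous step then guarantees that $\phi$ induces a label-preserving graph isomorphism $\mathcal G(\Gamma_1) \simeq \mathcal G(\Gamma_2)$.

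Finally, $\mathcal G(\Gamma_P)$ is by construction the boundary 1-skeleton of $P$: a disjoint union of finitely many bi-infinite paths and finite cycles, one per boundary component, with vertex labels in $\{\pi/2, \pi/4\}$. Reading the labels along each such component returns the angle sequence of that boundary component, with the only ambiguity being the choice of starting vertex and direction, i.e., shift and reversal (mirror). The graph isomorphism from the preceding paragraph therefore forces the angle sequences of $P_1$ and $P_2$ to agree up to shift and mirror, as required. The hard part is concentrated in the algebraic characterization of reflections in the first step, which hinges critically on the restriction to angles $\pi/2$ and $\pi/4$ to guarantee that every wall meets perpendicular walls at every tiling-vertex along it; in the absence of this restriction one would have to invoke more powerful rigidity theorems for even Coxeter systems.
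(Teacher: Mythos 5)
Your proof is correct, but it takes a genuinely different route from the paper's. The paper argues directly with the canonical presentations: it shows that any isomorphism must carry each generator $s_i$ to a conjugate of $t_{\eps i+k}$ for a fixed sign $\eps$ and offset $k$, by locating the images of the subgroups $\langle s_i,s_{i+1}\rangle\cong(\ZZ/2)^2$ inside finite dihedral subgroups of $\Gamma_2$ and running an induction along the boundary; the case of several boundary components is then reduced to the connected case via free indecomposability and the Kurosh subgroup theorem. You instead reconstruct the labelled boundary graph of $P$ purely from conjugacy-class invariants of $\Gamma_P$: reflections are characterized as the involutions with infinite centralizer (the geometric input being that every wall carries infinitely many tiling vertices, at each of which the local group of order $2^k$ supplies a commuting reflection in the perpendicular wall, while a $\pi$-rotation has centralizer inside a finite vertex stabilizer), maximal finite subgroups are the vertex stabilizers, and evenness of all Coxeter labels makes the conjugacy classes of these objects biject with the sides and vertices of $P$, with the correct incidences. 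This is essentially the strategy of the general even-Coxeter rigidity theorems (Radcliffe, Bahls) that the paper cites as an alternative to its ad hoc argument, implemented concretely through the hyperbolic geometry of the tiling. Your route buys a uniform treatment of disconnected boundaries --- no appeal to Kurosh is needed, since the graph construction is insensitive to connectivity --- at the price of the centralizer lemma; note that this lemma does require every side of $P$ to be a compact segment (so that each wall really contains infinitely many vertices), an assumption implicit in the statement and equally implicit in the paper's use of the presentation $\langle s_i\mid (s_is_{i+1})^{\pi/\alpha_i}, s_i^2\rangle$.
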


\begin{proof}
  We note that a much more general result is given in \cite[Theorem 4.2]{Bahls_book} (the result there is only written for finitely generated Coxeter groups but it generalises to Coxeter groups associated to locally finite graphs). We give a proof in our particular case where it is much simpler than in general dur to the linear nature of the defining graphs. 

  \medskip

  We first prove the case where $P_i$ have a single boundary component. 

  We will denote the angle sequences of the polyhedra by $(\alpha_i)$ and $(\beta_i)$ and use the canonical presentations
  \[
  \Gamma_1 = \langle s_i | (s_is_{i+1})^{\pi/\alpha_i}, s_i^2 \rangle, \: \Gamma_2 = \langle t_i | (t_it_{i+1})^{\pi/\beta_i}, t_i^2 \rangle.
  \]
  Let $\varphi : \Gamma_1 \to \Gamma_2$ be an isomorphism; we will prove that for every $i \in \ZZ$, $\varphi(s_i)$ is conjugated to $t_{\eps i+k}$ for some $\eps \in \{\pm 1\}$ and $k \in \ZZ$, from which the claim about the angle sequences follows immediately. 

  Any involution in $\Gamma_1$ is either a reflection or the product of two reflections with orthogonal axes, and in the latter case the pair of reflections has to be conjugate to a pair $s_i, s_{i+1}^g$ (for some $g \in \langle s_i, s_{i+1}\rangle$). We may assume that at least one of the $\alpha_i$ is equal to $\pi/2$, and shift the sequence for it to be $\alpha_0$, so that $\langle s_0, s_1\rangle \cong (\ZZ/2)^2$. All subgroups of $\Gamma_2$ isomorphic to $(\ZZ/2)^2$ are contained in one of the finite dihedral subgroups $\langle t_i, t_{i+1}\rangle$ so we must have that
  \[
  \varphi(s_0), \varphi(s_1) \in \langle t_k, t_{k+1}\rangle
  \]
  for some $k \in \ZZ$. Since $(s_0s_1)$ has no square root in $\Gamma_1$ we must have $\beta_k = \pi/2$, hence the involutions in $\langle t_k, t_{k+1}\rangle$ are exactly $t_k, t_{k+1}, (t_kt_{k+1})$. Moreover we cannot have $\varphi(s_0) = t_kt_{k+1}$: if this were the case then $\varphi(s_0), \varphi(s_{-1})$ would generate an infinite subgroup (since the fixed point sets of $\varphi(s_0)$ and $\varphi(s_{-1})$ are disjoint in this case) which is not possible since $\langle s_{-1}, s_0 \rangle$ is finite.

  It follows from the preceding paragraph that we have $\varphi(s_i) = t_k$ or $t_{k+1}$ and similarly $\varphi(s_1)$ is equal to the other among $t_k, t_{k+1}$. Reversing the order of the sequence $(\beta_i)$ if necessary we may assume that
  \[
  \varphi(s_0) = t_k, \: \varphi(s_1) = t_{k+1}.
  \]
  In the rest of this section we prove that $\varphi(s_2)$ is conjugated to $t_{k+2}$, and by using the argument as an induction step it follows that $\varphi(s_i)$ is conjugated to $t_{k+i}$ for all $i$.

  If $\alpha_1 = \pi/2$ then the same argument as above works to prove that $\varphi(s_2) = t_{k+2}$. If $\alpha = \pi/4$ then by the same argument as above $\varphi(s_2)$ is a reflection in $\langle t_{k+1}, t_{k+2} \rangle$ conjugated to $t_{k+2}$, that is either $t_{k+2}$ or $t_{k+1}t_{k+2}t_{k+1}$, since these are the only possibilities for an involution $t \in \langle t_{k+1}, t_{k+2} \rangle$ so that $(t_{k+1}t)^4 = 1$ but $(t_{k+1}t)^2 \not= 1$. 

  \medskip

  In the general case where $P_i$ may each have (finitely many) distinct boundary components, the groups $\Gamma_{P_i}$ are isomorphic to free products $\Gamma_{Q_{i, 1}}\ast\cdots\ast\Gamma_{Q_{i,m_i}}$ where $Q_{i, j}$ are Coxeter polyhedra corresponding to the boundary components of $P_i$. We claim that the reflection group of a Coxeter polyhedron with a single boundary component is freely indecomposable: more generally, a Coxeter group with connected diagram (where two generators are linked by an edge whenever their product has finite order) is so. To prove this note that, since they are involutions, all Coxeter generators must lie in a conjugate of some free factor in any decomposition of $\Gamma$ as a free product. On the other hand two generators connected by an edge in $G$ must lie in the same conjugate of the same free factor since their product has finite order, so by connectedness any two generators must lie in the same conjugate of the same free factor. It follows that the decomposition is trivial.

  By the Kurosh isomorphism theorem \cite{Kurosh_iso} it follows that the $\Gamma_{Q_{i, j}}$ are pairwise isomorphic, and by the previous case that their angle sequences must be pairwise equal or mirrored. 
\end{proof}


\section{Diffuse IRSs in right-angled reflection groups} \label{sec:right_angled}

In this section we assume $d \ge 3$. 

\subsection{Construction}

Let $P$ be a right-angled polyhedra of finite volume in $\HH^d$. We note that if $F, E$ are faces of $P$ which are not adjacent then they are disjoint (for instance because $P$ being right-angled, the link of every vertex is a simplex). 

Choose a face $F$ of $P$. Let $F'$ be a $F$-tiled polyhedron (in $\HH^{d-1}$) such that its number of $(m-2)$-faces is larger than that of any $K$-tiled polhedron whose dual graph is of diameter at most $d$ where $K$ is any face of $P$: since there are finitely many such possible graphs such a $F'$ exists, in fact any $F$-tiled polyhedron of sufficiently large volume works. We also require that the graph of the tiling of any $(m-2)$-face of $F'$ by faces of $F$ is of diameter at most 2 (we can ensure this for exemple by constructing $F'$ by doubling successively with respect to disjoint faces). 

Viewing $F'$ in the $P$-tiling of $\HH^d$ let $Q_0$ be the union of all copies of $P$ which intersect $F'$ in a translate of $F$. Let $Q$ be the subpolyhedron including only those $P$s lying on an arbitrarily chosen side of $F'$. Then $Q$ has a side isometric to $F'$.

We claim that all other faces of $Q$ have a tiling by some face of $P$, whose dual graph has diameter at most $d$. Let $D$ be such a face. We prove the claim first in the case where $D$ is not adjacent to $F'$: then $D$ is constructed from a face $E$ of $P$ at distance at least 2 from $F$ in the dual graph of $P$, by taking its reflections with respect to faces $F_0$ of $P$ which are adjacent to itself and to $F$ (we will say that $F_0$ is joining $E$ to $F$). If the distance from $E$ to $F$ is equal to 2 then $D$ is equal to $\Pi \cdot E$ where $\Pi$ is the stabiliser of the intersection of all faces joining $E$ to $F$, hence its dual graph has diameter $k$ where $k$ is the codimension in $\HH^d$ of the intersection, which is at most $(d-1)$. If the distance from $E$ to $F$ is $>2$, i.e. there are no faces joining $E$ to $F$, then the face $D$ is actually a face of $P$. 

In the case where $D$ is adjacent to $F'$, its tiling is isomorphic to that of its intersection with $F'$, which is of diameter at most 2 by the second hypothesis on $F'$ in the second paragraph. 

\medskip

Let $F''$ be a face at distance $>2$ from $F$ in the dual graph of $P$, and $F_+, F_-$ two faces of $P$ at distance $>2$ from $F''$ in the dual graph (we can ensure their existence by replacing $P$ by a suitable doubling). By gluing $P$ with $Q$ along a copy of $F''$ in $\pl Q$, we get a polyhedron $\wdh P_2$ which has one face isometric to $F'$ and all its other faces have strictly smaller valency (since they are tiled by faces of $P$ with graph of diameter at most $(d+1)$ by the claim above). Moreover all faces of $\wdh P_2$ adjacent to $F_+$ or $F_-$ are isometric to faces of $P$.

We further choose the gluing copy of $F''$ so that $\wdh P_2$ has no non-trivial self-isometry: this is possible since such an isometry must preserve both $F'$ (as it is the only highest degree face) and the glued $P$ (as it is the only polyhedron disjoint from $F'$), hence choosing the gluing so that the projection of the glued $P$ does not contain the center of $F''$ ensures that the only isometry is the identity.  

For the other polyhedron we simply choose $\wdh P_1 = P$. 


\subsection{Verification}

The following two lemmas establish that both conditions in Lemma \ref{criterion} are satisfied by the two polyhedra constructed above, proving Theorem \ref{main2}.  

\begin{lem}
  With $\wdh P_1,\, \wdh P_2$ as above, two polyhedra $P_a, P_{a'}$ are isometric to each other if and only if they $a=a'$ up to shift. 
\end{lem}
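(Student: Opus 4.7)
The forward implication is immediate: if $a' = \sigma^k(a)$ then relabelling the chain by $i \mapsto i+k$ yields a congruent tiling, so $P_a$ and $P_{a'}$ are isometric. The content of the lemma is the converse.

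Suppose $f : P_a \to P_{a'}$ is an isometry. The plan is to read off $a$ from the intrinsic geometry of $P_a$ using two features engineered in the preceding construction: first, the face $F'$ of $\wdh P_2$ has strictly more $(d-2)$-faces than any other face that can appear on the boundary of a $\wdh P_1$-tile or of a $\wdh P_2$-tile (the latter being tiled by faces of $P$ with dual graph of small bounded diameter, which was the whole point of the careful choice of $F'$); second, $\wdh P_2$ admits no non-trivial self-isometry.

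The first property implies that the boundary faces of $P_a$ isometric to $F'$ are precisely those contributed by $\wdh P_2$-tiles, one per tile, and are intrinsically characterised as the boundary faces with maximum number of $(d-2)$-faces. Hence $f$ maps $F'$-faces to $F'$-faces, and therefore $\wdh P_2$-tiles to $\wdh P_2$-tiles. The second property then forces the restriction of $f$ to each such tile to be the unique isometry onto its image; in particular it cannot interchange the two chain-gluing faces $F_+$ and $F_-$. Consequently $f$ preserves the linear order of the tiles along the chain and acts on the $\ZZ$-indexing by a shift $i \mapsto i+k$. The number of $\wdh P_1 = P$-tiles between any two consecutive $\wdh P_2$-tiles is then also preserved, so $a'_i = a_{i-k}$ for all $i \in \ZZ$, which is the desired conclusion.

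The only non-routine point is the unique recognition of $F'$ among boundary faces of $P_a$. This was already arranged in the previous subsection by choosing $F'$ as an $F$-tiled polyhedron whose $(d-2)$-face count strictly dominates that of every other relevant $K$-tiling of bounded diameter; once this is in hand, the rigidity conclusion follows mechanically from the no-self-isometry property of $\wdh P_2$. A small bookkeeping step is to verify that an abstract isometry between the finite-volume polyhedra $P_a, P_{a'}\subset\HH^d$ extends to an ambient isometry of $\HH^d$, which is standard and will not require any separate argument.
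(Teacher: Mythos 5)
Your proof is correct and follows essentially the same route as the paper: identify the $F'$-faces as the faces of maximal degree (using that all faces created by the chain gluings have $P$-face tilings of bounded diameter), deduce that any isometry matches up the $\wdh P_2$-tiles via the unique isometry between them, and propagate along the chain to recover the sequence up to shift. The paper phrases the argument for self-isometries of a single $P_a$ (so as to also extract the periodicity statement), but the underlying mechanism is identical.
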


\begin{proof}
  First, it follows from the construction of $\wdh P_2$ that for any $a \in \{1, 2\}^\ZZ$, any face of maximal degree in $P_a$ is a $F'$-face in a copy of $\wdh P_2$. (this is true in $\wdh P_2$, and by gluing a copy of $\wdh P_1$ or $\wdh P_2$ along $F_+$ we only add faces with a tiling by faces of $P$ of diameter 2 since all faces of either $\wdh P_i$ adjacent to $F_\pm$ are faces of $P$). 

  Let $a \in \{1, 2\}^\ZZ$, let $k$ such that $a_k = 2$ and let $f$ be a self-isometry of $P_a$. Then $f(P_{a_k}) = P_{a_l}$ for some $l$ with $a_l = 2$ : by maximality of degree, $f$ has to map the $F'$-face $F_{a_k}'$ of $P_{a_k}$ to the $F'$-face $F_{a_l}'$ of some such $P_{a_l}$. Then, by the same argument as used for proving that $\wdh P_2$ has no symmetries we see that there is a unique isometry satisfying this which also maps the copy of $P$ disjoint from $F'$ in $P_{a_k}$ to the one in $P_{a_l}$, and as the latter is is the only copy of $P$ at distance 1 from $F'$ in $P_a$, $f$ must be equal to this isometry and in particular $f(P_{a_k} = P_{a_l}$ and $f$ is the only such isometry. By an inductive argument (see the proof of Lemma \ref{recog_fuchsian}) it follows that either $f$ is the identity or $k \not=l$ and $a$ is $|k-l|$-periodic. 
\end{proof}

\begin{lem}
  Let $d \ge 4$ and $R, R'$ be two right-angled polyhedra in $\HH^d$ all of whose $(d-1)$-faces are of finite $(d-1)$-volume and whose boundary is connected. Then $\Gamma_R$ is isomorphic to $\Gamma_{R'}$ if and only if $R$ is isometric to $R'$. 
\end{lem}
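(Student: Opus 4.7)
The plan is to recover the combinatorial structure of $R$ from the isomorphism type of $\Gamma_R$, and then to promote the combinatorial identification to a hyperbolic isometry by induction on the dimension $d$.

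First I would show that any isomorphism $\varphi \colon \Gamma_R \to \Gamma_{R'}$ carries the set of reflections of $\Gamma_R$ to that of $\Gamma_{R'}$ and, after composing with an inner automorphism, sends the canonical facet generators of $\Gamma_R$ to those of $\Gamma_{R'}$. Reflections admit an intrinsic algebraic characterisation in right-angled Coxeter groups, for instance via the action on the Davis cube complex (their centralisers have a specific product decomposition and their fixed sets are walls), and by the reflection-rigidity results for right-angled Coxeter systems due to Mühlherr, Radcliffe and others, the defining graph is recovered from the group up to isomorphism provided $\Gamma_R$ admits no nontrivial free product decomposition. The hypothesis that $\partial R$ is connected is exactly this indecomposability, since a free splitting of a right-angled Coxeter group corresponds to a partition of the generators such that no two generators from different parts commute, which in turn corresponds to a disconnection of the facet-adjacency graph of $R$.

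Next, since two facets of $R$ are adjacent if and only if the corresponding canonical reflections commute, and more generally $k$ facets share a codimension-$k$ face if and only if their reflections pairwise commute, the bijection of generators induced by $\varphi$ preserves the entire face lattice. Hence $R$ and $R'$ are combinatorially isomorphic.

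Finally I would upgrade this combinatorial isomorphism to an isometry by induction on $d$, starting from $d=4$. When $d=4$, each facet of $R$ is a finite-volume right-angled polyhedron in $\HH^3$, and by Andreev's theorem such a polyhedron is determined up to isometry by its combinatorial type; the combinatorial isomorphism therefore identifies each facet of $R$ isometrically with its image in $R'$. The right-angle condition on the codimension-2 faces then prescribes a unique compatible identification of the ambient $\HH^4$ near any pair of matched facets, and the connectedness of $R$ propagates this to a global isometry $R \to R'$. The inductive step is identical, replacing Andreev's theorem by the inductive hypothesis applied to the $(d-1)$-dimensional finite-volume right-angled facets.

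The main obstacle is the algebraic recognition step, since infinite right-angled Coxeter groups need not in general have a unique reflection generating set; the hypothesis of connected boundary, together with the geometric constraints coming from $\HH^d$, is what forces rigidity here. The dimension hypothesis $d\ge 4$ is essential for the geometric induction, since right-angled polygons in $\HH^2$ — which would appear as facets when $d=3$ — are not combinatorially rigid and the induction cannot be started in that case.
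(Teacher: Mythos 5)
Your proposal is correct and its first half coincides with the paper's argument: both reduce the algebraic statement to combinatorics via the rigidity of right-angled Coxeter systems (Radcliffe, also Bahls), so that an abstract isomorphism $\Gamma_R \cong \Gamma_{R'}$ forces an isomorphism of the facet-adjacency data, hence of the face lattices (using, as you note, that pairwise commuting reflections in a right-angled hyperbolic polytope generate a finite group fixing a common face). The two arguments diverge in how they promote the combinatorial isomorphism to an isometry. The paper observes that corresponding $(d-1)$-faces are finite-volume right-angled polyhedra of dimension $d-1\ge 3$ whose reflection groups are isomorphic (the combinatorial isomorphism preserves all Coxeter data since every angle is $\pi/2$), and invokes Mostow--Prasad strong rigidity in dimension $d-1$ to conclude these faces are isometric; the unique way of assembling two $(d-1)$-faces at a right angle along a common ridge, together with connectedness of the boundary, then yields the global isometry in one step, with no induction on $d$. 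You instead run an induction on $d$ with Andreev's theorem as the base case $d=4$. That route also works, but note two small points: (i) to propagate the isometry across ridges you need the \emph{given} face-lattice isomorphism to be realised by the isometries of facets, which is what both Andreev uniqueness and Mostow--Prasad actually deliver but which the bare statement ``isometric'' in your inductive hypothesis does not record --- you should strengthen the induction hypothesis accordingly; and (ii) for the inductive step you must check that the facets again satisfy the hypotheses of the lemma (finite-volume ridges, connected boundary), which holds but deserves a sentence. The paper's use of Mostow--Prasad sidesteps both issues and also explains the hypothesis $d\ge4$ exactly as you do: $2$-dimensional right-angled polygons are not determined by their combinatorics, which is why the argument cannot start (or, in the paper's version, why rigidity fails for the faces) when $d=3$.
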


\begin{proof}
  By a particular case of the main result of \cite{Radcliffe_rigid}\footnote{see also \cite[Theorem 4.1]{Bahls_book}; a less general result which already includes right-angled Coxeter groups was proven earlier in the unpublished thesis of E.~Green. The statement in loc. cit. includes only right-angled Coxeter groups associated with finite graphs but the arguments there generalise with essentially no modification needed to groups associated with locally finite graphs. } any group isomorphism from $\Gamma_R$ to $\Gamma_{R'}$ is induced by an isomorphism between the dual graphs of $R$ and $R'$. Thus they are isomorphic as abstract polyhedra. It follows that their codimension 1 faces are also pairwise isometric to each other. Since these are compact hyperbolic right-angled polyhedra of dimension $d-1 \ge 3$, by the Mostow--Prasad strong rigidity theorem \cite{Mostow_rigidity,Prasad_rigidity} it follows that they must be pairwise isometric to each other. Since there is a unique way up to isometry to assemble two $(d-1)$-polyhedra at right angles along a codimension 1 face in $\HH^d$, it further follows that the boundary components of $P$ and $P'$ are pairwise isomorphic. Since $\pl P, \pl P'$ are connected there is a unique isometry of $\HH^d$ which extends the isometry between them, and which induces an isometry between $P$ and $P'$. 

\end{proof}


\section{Faithful and spanning invariant random subgroups in higher dimensions} \label{sec:faithful}

\subsection{Nomenclature} \label{def_GH}

Let $\Gamma$ be a countable group and $\mu$ an IRS in $\Gamma$. We recall two natural objects associated to $\mu$, which were introduced in \cite{Glasner_Hase}. The {\em kernel} $K_\mu$ of $\mu$ is the largest normal subgroup of $\Gamma$ which is contained in $\mu$-almost every subgroup (it is the generalisation to IRSs of the normal core of finite-index or almost-normal subgroups). In other words
\[
K_\mu = \{x\in\Gamma :\: \PP_\mu(x \in H) = 1\}. 
\]
The IRS $\mu$ is said to be {\em faithful} if $K_\mu = \{1\}$ (in other words there is no nontrivial normal subgroup contained in $\mu$-almost every subgroup). Concretely this means that
\[
\forall g \in \Gamma \setminus \{1\} \quad \PP_\mu(g \not\in H) > 0.  
\]

The {\em normal envelope} $E_\mu$ of $\mu$ is the smallest normal subgroups which contains $\mu$-almost every subgroup (generalisation of the subgroup normally generated), so that
\[
E_\mu = \langle x \in \Gamma :\: \PP_\mu(x \in H) > 0\rangle.
\]
The IRS $\mu$ is said to be {\em spanning} if $E_\mu = \Gamma$. Equivalently, there exists a normally generating set $S$ for $\Gamma$ such that $\PP_\mu(s \in H) > 0$ for all $s \in S$. 

\subsection{Convex envelopes}

If $P \subset \HH^d$ is a {\em compact} right-angled polytope and $S \subset \HH^d$, $d=3, 4$, define its convex envelope $\Conv_P(S)$ to be the smallest convex union of all polyhedra which contains $S$ in the tiling $\HH^d = \Gamma_P \cdot P$. The convex envelope is always a right-angled polyhedron. 

The following lemma generalises \cite[Lemmas 3.1 and 4.2]{Patel} from geodesic lines to convex subsets but its proof is almost immediate from the statements in loc. cit. 

\begin{lem} \label{envelope}
  If $C \subset \HH^d$ is convex then $\Conv_P(S)$ is contained in the $D+\log(2 + \sqrt 3)$-neighbourhood of $C$ (where $D$ is the diameter of $P$).
\end{lem}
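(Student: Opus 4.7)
The plan is to generalise \cite[Lemmas 3.1 and 4.2]{Patel}, which handle the case $C = \gamma$ a geodesic line, by replacing the orthogonal projection onto $\gamma$ with the nearest-point projection onto a convex set. The only property of the projection onto a geodesic used in \cite{Patel} is the perpendicularity condition $[x, \pi_\gamma(x)] \perp \gamma$ at $\pi_\gamma(x)$, and for a general convex set $C$ this is weakened only to the inequality $\angle x\, \pi_C(x)\, y \geq \pi/2$ for every $y \in C$. Since the relevant trigonometric estimate depends monotonically on this angle, the bound carries over essentially unchanged.

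Concretely, I would fix $x \in \Conv_P(C)$ and let $y = \pi_C(x) \in C$. Let $P_x$ denote the polyhedron of the $\Gamma_P$-tiling containing $x$, and let $z$ be the point at which the geodesic segment $[x, y]$ first exits $P_x$, through a face whose supporting hyperplane I denote by $H$. Since $d(x, z) \leq D$, it suffices to show that $d(z, y) \leq \log(2 + \sqrt{3})$.

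Because $P_x \subset \Conv_P(C)$ and $H$ separates $y$ from the interior of $P_x$, the hyperplane $H$ cannot entirely separate $C$ from $P_x$: otherwise the opposite closed half-space would be a convex union of polyhedra containing $C$ and hence also $\Conv_P(C)$, excluding $P_x$. Consequently $C \cap H \neq \emptyset$; pick $y_0 \in C \cap H$. Convexity of $C$ gives $[y, y_0] \subset C$, whence $\angle z y y_0 = \angle x y y_0 \geq \pi/2$ by the projection property. One therefore obtains a hyperbolic triangle $z y y_0$ with one side contained in $H$ and obtuse (or right) angle at $y$. This is precisely the configuration analysed in \cite{Patel}, and applying their trigonometric optimisation (using additionally the right-angledness of the tiling when $z$ happens to lie on a codimension-$2$ stratum, so that a second orthogonal face-hyperplane through $z$ is available) yields $d(z, y) \leq \log(2 + \sqrt{3})$.

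\textbf{Main obstacle.} The only step requiring verification beyond a direct appeal to \cite{Patel} is that the weak perpendicularity $\angle xyy' \geq \pi/2$ suffices in place of the exact perpendicularity used for geodesic lines. Since the relevant quantity in the hyperbolic law of cosines depends on this angle through its cosine — which has the correct sign as soon as the angle is at least $\pi/2$ — this is a direct monotonicity check, and the bound obtained is the same as in the geodesic-line case. This is the sense in which, as the text preceding the lemma asserts, the proof is ``almost immediate from the statements in loc. cit.''
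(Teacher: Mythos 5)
Your reduction to the triangle $zyy_0$ is where the argument breaks down. What you have actually established about this triangle --- that $z$ and $y_0$ both lie on the wall $H$ and that the angle at $y$ is at least $\pi/2$ --- does not bound $d(z,y)$, and no monotonicity in the angle can rescue this. An angle $\ge\pi/2$ at $y$ subtended by two points of $H$ only forces $y$ to be close to the hyperplane $H$ (at distance at most $\log(1+\sqrt2)$, since the angular radius of $H$ seen from distance $t$ is $\arcsin(1/\cosh t)$); it says nothing about the distance from $y$ to the particular point $z$ where $[x,y]$ crosses $H$. Concretely: take $H$ a geodesic in $\HH^2$, $z,y_0\in H$ with $d(z,y_0)=L$ large, and $y$ at distance $1/2$ from the midpoint of $[z,y_0]$; then $\angle zy y_0>\pi/2$ while $d(z,y)\approx L/2$. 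Nothing in the configuration you extract (one point of $C\cap H$, one wall of $P_x$) excludes this picture, so the claim that ``applying their trigonometric optimisation yields $d(z,y)\le\log(2+\sqrt3)$'' is unfounded. The actual engine of Patel's estimate is the interaction of the geodesic with \emph{two perpendicular} walls of the right-angled tiling, and you have relegated precisely this ingredient to a parenthetical about the measure-zero case where $z$ lies on a codimension-two stratum; in the generic case your sketch never uses right-angledness at all, so it cannot produce the constant. (The step that is correct and genuinely new in your write-up is the separation argument showing that a wall crossed by $[x,y]$ must meet $C$.)

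The paper proves the lemma without reopening Patel's trigonometry at all. It takes the statements of Lemmas 3.1 and 4.2 of Patel as black boxes for geodesic lines $\alpha$, deduces $\Conv_P(H)\subset N_R(H)$ for a hyperplane $H$ by writing $\Conv_P(H)=\bigcup_{\alpha\subset H}\Conv_P(\alpha)$, passes to half-spaces, and then handles a general convex $C$ by $\Conv_P(C)\subset\bigcap_{H^+\supset C}\Conv_P(H^+)\subset\bigcap_{H^+\supset C}N_R(H^+)=N_R(C)$, the last equality being the standard CAT(0) fact proved with the nearest-point projection (if $d(x,\pi_C(x))>R$, the half-space bounded by the hyperplane through $\pi_C(x)$ orthogonal to $[x,\pi_C(x)]$ contains $C$ and lies at distance $>R$ from $x$). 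This formal reduction is what makes the proof ``almost immediate from the statements in loc.\ cit.'' If you want to keep your direct approach, you must prove --- not merely cite --- a quantitative statement of the form: a tile all of whose relevant walls meet $C$ lies within bounded distance of $C$, and that proof has to exploit two perpendicular walls (or the uniform separation of disjoint walls), which is exactly the content your sketch omits.
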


\begin{proof}
  The statements of \cite[Lemma 3.1, Lemma 4.2]{Patel} give that for any geodesic line $\alpha \subset \HH^d$ we have $\Conv_P(\alpha) \subset N_R(\alpha)$ with $R = D+\log(2 + \sqrt 3)$, where $N_R(S)$ denotes the $R$-neighbourhood of $S$ in $\HH^d$. If $H$ is an hyperplane in $\HH^d$ then $\Conv_P(H) = \bigcup_{\alpha \subset H} \Conv_P(\alpha)$ and it follows immediately that $\Conv_P(H) \subset N_R(H)$. It follows immediately that the same is true for a half-space $H^+$ delimited by $H$. 

  Now if $C$ is an arbitary convex subset we have
  \[
  \Conv_P(C) = \bigcap_{H^+ \supset C} \Conv_P(H^+)
  \]
  (immediate since $C = \bigcap_{H^+ \supset C} H^+$) so that
  \[
  \Conv_P(C) = \bigcap_{H^+ \supset C} N_R(H^+).
  \]
  As $\bigcap_{H^+ \supset C} N_R(H^+) = N_R(C)$ (which can be seen in any CAT(0)-space by using projections on $C$) we get the result for $C$ as well. 
\end{proof}


\subsection{Proof of Theorem \ref{faithful}}

Le $P$ be a compact right-angled polyhedron in $\HH^d$, $d \ge 2$, and $\Gamma=\Gamma_P$. We will use the notation
\[
B_R = \{\gamma\in\Gamma :\: d(o, \gamma o) < R\}. 
\]
We fix an interior point $o$ in $P$ and let $P_n = \Conv_P(B(o, n))$ and $\mu_n$ be the IRS $\mu_{\Gamma_{P_n}}$ in $\Gamma$. 

Let $\mu_0$ be an accumulation point of $\mu_n$. Write $\mu_0 = t\mu_1 + (1-t)\delta_{\{\id\}}$; by \cite{BSlim_coxeter} we have $t>0$ and $\mu_1$ is supported on reflection groups: in fact it follows from Lemma \ref{equidis} below we have $t = 1$ in this case. We will prove that almost-every ergodic component of $\mu_1$ must be faithful and spanning.


\subsubsection{Spanning-ness}

\begin{lem} \label{equidis}
  There are constants $C, c > 0$ such that for all $n, R$ and all face types $s$ of $P$
  \[
  \frac{\vol(x\in P_n : \text{ there exists a face of type $s$ at distance $\le R$ from } x)}{\vol(P_n)} \ge 1-Ce^{-cR}.
  \]
\end{lem}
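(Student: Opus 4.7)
The plan is to bound the complement: let $T_s(R) = \{x \in P_n : \text{no face of type } s \text{ lies within } R \text{ of } x\}$, and show $\vol(T_s(R))/\vol(P_n) \le Ce^{-cR}$ for constants $C, c > 0$ depending only on $P$. The argument combines two ingredients.

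The first is a hyperbolic inner-shell bound: since $P_n$ is a convex body in $\HH^d$, the set $P_n^{(R)} := \{x \in P_n : d(x,\partial P_n) > R\}$ satisfies $\vol(P_n^{(R)})/\vol(P_n) \le C_1 e^{-(d-1)R}$ for some $C_1 = C_1(P)$ uniform in $n$. I would prove this by parameterising $P_n \setminus P_n^{(R)}$ by the inward normal geodesics from $\partial P_n$: the $(d-1)$-area of the parallel inner hypersurface at depth $t$ is bounded by $e^{-(d-1)t}$ times that of $\partial P_n$, by a standard computation using the convexity of $P_n$ and the exponential convergence of geodesic normals to convex hypersurfaces in $\HH^d$. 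Because $P_n \supset B(o,n)$, the ratio $\vol_{d-1}(\partial P_n)/\vol(P_n)$ is bounded uniformly in $n$, which gives the required estimate after integrating.

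The second ingredient is an equidistribution of face types on the boundary: there exists $D' = D'(P)$ such that every $y \in \partial P_n$ lies within $D'$ of some face of $P_n$ of type $s$, for every face type $s$. I would argue by walking along $\partial P_n$: starting from a boundary face containing $y$, one crosses a codimension-$2$ ridge of $P_n$ to reach an adjacent boundary face of a different type. Since the Coxeter diagram of $P$ is connected (otherwise $\Gamma_P$ would split as a nontrivial free product, which is impossible for a cocompact lattice in $\PO(d,1)$), one can reach any type $s$ in a bounded number of ridge crossings, each of length at most the diameter of $P$. Combining with the first ingredient, $x \in T_s(R)$ forces its nearest boundary point to be at distance $> R - D'$ from $x$ (else a type-$s$ face near that boundary point would contradict the definition of $T_s(R)$), so $T_s(R) \subseteq P_n^{(R-D')}$ and $\vol(T_s(R))/\vol(P_n) \le C_1 e^{-(d-1)(R-D')}$, giving the lemma after absorbing the shift into $C$.

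The main obstacle will be the second ingredient: walls of the convex hyperbolic polytope $P_n$ can have arbitrarily large diameter (they are themselves hyperbolic polytopes containing many tiles of $P$), so if $y$ lies deep in the interior of such a large wall, no codimension-$2$ ridge is nearby and the naive walking argument fails to produce a type-$s$ face within $D'$. Circumventing this requires exploiting the specific structure of the convex envelope $\Conv_P(B(o,n))$—for instance, showing that each wall of $P_n$ must expose codimension-$2$ ridges of all types within a bounded neighbourhood of every interior point, using that $P_n$ is the convex hull of a ball centred at $o$ and that supporting walls of $P_n$ are constrained by the $\Gamma_P$-orbits of the faces of $P$.
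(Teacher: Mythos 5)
Your first ingredient (the inner-shell volume estimate, using $B(o,n)\subset P_n\subset B(o,n+C)$ and exponential volume growth) is fine and matches the corresponding step in the paper. The gap is in your second ingredient, and it is exactly the obstacle you flag at the end: the claim that every $y\in\pl P_n$ lies within a bounded distance $D'$ of a face of type $s$ is the entire content of the lemma, and the ridge-walking argument does not establish it. The walls of $P_n=\Conv_P(B(o,n))$ are themselves convex polytopes tiled by arbitrarily many copies of faces of $P$, so a point deep in the interior of a large wall is far from every codimension-$2$ ridge; and even near a ridge, connectedness of the Coxeter diagram tells you which types of faces of $P$ are adjacent inside a single tile, but gives no control over which tile-faces are actually \emph{exposed} on $\pl P_n$ (a given supporting hyperplane of the tiling need not carry faces of all types, and which adjacent walls appear at a ridge of $P_n$ depends on the global shape of the envelope, not on local Coxeter combinatorics). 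So as written the proof reduces the lemma to an unproved statement of essentially the same difficulty.

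The paper fills this gap with a genuinely different, dynamical argument. The key observation is that if a unit vector $v$ is simultaneously an outward normal to $\pl B(o,n)$ and an outward normal to a face $F$ of a tile $\gamma P$, then $F$ lies on $\pl P_n$; this persists for $v$ in an open neighbourhood $U_s\subset T^1P$. Rooted at a point of $\pl B(o,n)$, the outward normal field converges to the normal field of a horosphere, and by amenability of unipotent subgroups together with Ratner's measure classification the uniform measure on an $R$-ball of that field, projected to $T^1(\Gamma_P\bs\HH^d)$, converges to the Liouville measure as $R\to\infty$. Hence for $R$ large each $U_s$ is hit, which produces a face of type $s$ of $P_n$ within bounded distance of \emph{every} point of $\pl B(o,n)$ (note: of the sphere, not of $\pl P_n$ -- the paper then concentrates the volume of $P_n$ near $\pl B(o,n)$ rather than near $\pl P_n$, which also avoids having to analyse $\pl P_n$ directly). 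This equidistribution input is what replaces your missing second ingredient, and I do not see an elementary combinatorial substitute for it.
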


\begin{proof}
  We will prove the following statement as a corollary of Ratner's classification of unipotent-invariant measures: there exists $R > 0$ such that for all $n$, for any $x$ on the sphere $\pl B(o, n)$ there is a face of type $s$ of $P_n$ at distance $\le R$ from $x$. The lemma follows immediately since it follows from Lemma \ref{envelope} and the exponential growth of the volume of hyperbolic balls that the proportion of volume of $P_n$ at distance $\ge T$ from $\pl B(o, n)$ is $O(e^{-dT})$ for some $d>0$, uniformly in $n$. 

  To prove this claim we note that if $v \in T^1P$ is an outwardly pointing normal vector to a face $F$ of a translate $\gamma P$ (for some $\gamma \in \Gamma_P$) and to $\partial B(o, n)$ at the same time then $F$ lies on the boundary of $P_n$. This remains true for $v$ in an open neighbourhood in the interior of $P_n$ (and still tangent to $B(o, n)$). Let $U_s$ be such a neighbourhood translated back to $P$: we may assume that baspoints of vectors in $U_s$ lie at distance $\le 1$ from $\pl P_n$. 

  Rooted at a boundary point, the outward normal field to $\pl B(o, n)$ converges (in pointed Gromov--Hausdorff topology on subsets of $T^1\HH^d$) to the outward normal field to an horosphere. It we consider the uniform probability measure on the image in $T^1P = \Gamma_P \bs \HH^d$ of a $R$-ball in the latter, it follows from amenability of unipotent subgroups and Ratner's measure classification theorem \cite{Ratner} that this measure converges weakly to the Liouville measure on $T^1P$. In particular, for $R$ large enough the probability of lying in each $U_s$ is positive, so (for $n$ large enough, a priori depending on $R$) it is also positive for the $R$-ball on $\pl B(o, n)$ (rooted at any point). By the preceding paragraph this means that there is a face of type $s$ of $P_n$ at distance $\le R+1$ from any point on $\pl B(o, n)$.
\end{proof}

It follows from this lemma that for any $R>0$, if we enumerate $B_R = \{g_1, \ldots, g_{m_R} \}$ then for any Coxeter generator $s$ of $\Gamma$
\[
\PP_{\mu_n}(g_isg_i^{-1} \in H \text{ for some } 1 \le i \le m_R) \ge 1 - Ce^{-cR}.
\]
It follows that 
\[
\PP_{\mu_n}(\forall s \, \exists g_s \in B_R :\: g_ssg_s^{-1} \in H) \ge 1-C'e^{-cR}.
\]
Since $B_R$ is finite it follows that the same property is true for $\mu_0$, and hence (by Markov's inequality) that almost every ergodic summand of $\mu_0$ almost surely contains a conjugate of every Coxeter generator of $\Gamma$. In particular it must be spanning. 


\subsubsection{Faithfulness}

%
%
%
%

\begin{lem}\label{injrad_large}
  Let $C = 2+\sqrt 3+D$. For any $R>0$, for $n$ large enough and any $x \in P_n$ there is a point $y \in P_n$ such that $d(x, y) \le R+C$ and $B(y, R) \subset P_n$.

  We may further assume that $y \in \Gamma\cdot x$. 
\end{lem}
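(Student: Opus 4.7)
The plan is to reduce everything to the observation that the convex envelope $P_n=\Conv_P(B(o,n))$ sits sandwiched between two metric balls: it always contains $B(o,n)$ (trivially, since $B(o,n)$ is convex), and by Lemma \ref{envelope} it is contained in $N_{C_0}(B(o,n))=B(o,n+C_0)$ with $C_0=D+\log(2+\sqrt3)$. So any point $y$ in the concentric inner ball $B(o,n-R)$ automatically satisfies $B(y,R)\subset B(o,n)\subset P_n$. The whole game is thus to move a given $x\in P_n$ to such a deep point without travelling more than $R+C$.

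First I would prove the statement without the orbit requirement. Fix $x\in P_n$; by Lemma \ref{envelope} we have $d(x,o)\le n+C_0$. If $d(x,o)\le n-R$ take $y=x$ and we are done; otherwise let $y$ be the point on the geodesic segment from $x$ to $o$ at distance exactly $n-R$ from $o$ (which requires $n\ge R$, hence the ``$n$ large enough'' clause). Then $y\in B(o,n-R)\subset P_n$, its $R$-ball is contained in $B(o,n)\subset P_n$ by convexity of the ball, and
\[
d(x,y)=d(x,o)-(n-R)\le (n+C_0)-(n-R)=R+C_0,
\]
which gives the bound with $C=C_0$ (the constant $2+\sqrt 3+D$ printed in the statement is presumably a typographical stand-in for $\log(2+\sqrt3)+D$, which is the one that Lemma \ref{envelope} actually produces).

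For the orbit refinement I would exploit the fact that $\HH^d=\Gamma\cdot P$ is tiled by copies of a polyhedron of diameter $D$, so for any two points $z,w\in\HH^d$ there exists $\gamma\in\Gamma$ with $d(\gamma z,w)\le D$; applied to $z=x$ this says that the orbit $\Gamma\cdot x$ is $D$-dense in $\HH^d$. Repeat the first construction but aim instead for the deeper inner ball $B(o,n-R-D)$: the same geodesic argument yields a point $y_0\in B(o,n-R-D)$ with $d(x,y_0)\le R+C_0+D$. Then pick $y\in\Gamma\cdot x$ with $d(y,y_0)\le D$; this $y$ lies in $B(o,n-R)$ so $B(y,R)\subset P_n$, while $d(x,y)\le R+C_0+2D$. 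Absorbing the additive $D$'s into a slightly larger constant $C$ gives the desired conclusion.

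The only genuine geometric input is Lemma \ref{envelope}; everything else is triangle-inequality bookkeeping plus the density of $\Gamma$-orbits. The ``main obstacle'' is purely cosmetic: making the explicit constant match the value written in the statement. If one wants precisely $C=\log(2+\sqrt3)+D$ in the orbit version too, one has to be cleverer, for instance by choosing the $\Gamma$-translate of $x$ \emph{before} fixing $y$ (replace $x$ by the $\Gamma$-translate of $x$ closest to $o$, argue the distance moved is $\le D$, and absorb this into the envelope constant via the convexity of $B(o,n)$). I would otherwise content myself with a constant of the form $C'=D+\log(2+\sqrt3)+O(D)$, which is what the downstream application in the faithfulness argument actually needs.
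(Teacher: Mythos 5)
Your proof is correct and follows essentially the same route as the paper's: sandwich $B(o,n)\subset P_n\subset B(o,n+C_0)$ via Lemma \ref{envelope}, move $x$ along the geodesic toward $o$ (the paper phrases this as projecting onto $B(o,n)$ and then sliding inward along the radius, which is the same construction), and invoke cocompactness, i.e.\ the $D$-density of $\Gamma$-orbits, for the orbit refinement, which the paper simply calls ``immediate''. Your remark about the constant is also consistent with the paper, which likewise does not track the extra additive $O(D)$ in the orbit version; only the finiteness of $\{g\in\Gamma : d(o,go)\le R+C\}$ is used downstream, so the precise value of $C$ is immaterial.
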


\begin{proof}
  Let $D$ be the diameter of $P$. Then $B(n) \subset P_n \subset B(n+C)$ by Lemma \ref{envelope} The first part of the result follows immediately: we project $x$ onto the convex subset $B(n)$ to get a point $x'$ at distance at most $2+\sqrt 3+D$ from $x$, and then move inside $B(n)$ along the radius through $x'$ until reaching a distance $\ge R$ from the boundary $\pl B(n)$: the resulting point satisfies the conclusions in the first paragraph of the statement of the lemma.

  The additional statement is immediate because $\Gamma$ is cocompact in $\HH^d$. 
\end{proof}

Because the set $\{g \in \Gamma : d_X(o, go) \le R+C\}$ is finite it follows from this lemma that for any $R > 0$ we have (denoting by $P_H$ the polyhedron associated with a reflection subgroup $H \in \sub_\Gamma$): 
\[
\PP_{\mu_1}(\exists g \in \Gamma :\: B(go, R) \subset P_H) = 1. 
\]
It follows that almost every ergodic summand of $\mu_1$ also satisfies this property. Let $\nu$ be such a summand. 

For a given $g \in \Gamma$ the subset of reflection subgroups $H \in \sub_\Gamma$ such that $B(gK, R) \subset P_H$ is an open subset in Chabauty topology. By compactness of the support of $\mu_1$ it follows that there exists $g_1, \ldots, g_N \in \Gamma$ such that  
\[
\PP_{\nu}(\exists 1\le i \le N :\: B(g_iK, R) \subset P_H) = 1. 
\]
so that there exists a $g \in \Gamma$ such that
\[
\PP_{\nu}(B(gK, R) \subset P_H) > 0.  
\]
Now $B(gK, R) \subset P_H$ means that for every nontrivial $x \in B_R$ we have $x \not\in gHg^{-1}$. By invariance of $\mu_1$ we get that
\[
\PP_{\nu}(H \cap B_R = \{1\}) = \PP_\nu(gHg^{-1} \cap B_R) > 0. 
\]
Since $R$ was arbitrary and $\Gamma = \bigcup_{R>0} B_R$ we get that for any nontrivial $x \in \Gamma$ we have $\PP_{\nu}(x \not\in H) > 0$, hence $\nu$ is faithful.


\bibliographystyle{alpha}
\bibliography{bib}

\end{document}